\numberwithin{equation}{section}
 \newtheorem{theorem}{Theorem}[section]
 \newtheorem{lemma}[theorem]{Lemma}
\def\3bar{{|\hspace{-.02in}|\hspace{-.02in}|}}
\def\cal#1{{\mathcal #1}}
\def\bb{{\mathbf{b}}}
\def\bv{{\mathbf{v}}}
\def\bn{{\mathbf{n}}}
\def\beta{{\boldsymbol{\eta}}}
\def\bvarphi{{\boldsymbol{\varphi}}}
\newtheorem{algorithm}{Weak Galerkin Algorithm}[section]
\numberwithin{equation}{section}
\def\3bar{{|\hspace{-.02in}|\hspace{-.02in}|}}
 \def\cal#1{\mathcal{#1}}
\def\ad#1{\begin{aligned}#1\end{aligned}}  \def\b#1{\mathbf{#1}} 
\def\a#1{\begin{align*}#1\end{align*}} \def\an#1{\begin{align}#1\end{align}} \def\t#1{\hbox{#1}}
\def\p#1{\begin{pmatrix}#1\end{pmatrix}}
\begin{document}

\title []{A Simple Weak Galerkin Finite Element Method for Convection-Diffusion-Reaction Equations on Nonconvex Polytopal Meshes}

  \author {Chunmei Wang$\dag$}\thanks{$\dag$ Corresponding author. } 
  \address{Department of Mathematics, University of Florida, Gainesville, FL 32611, USA. }
  \email{chunmei.wang@ufl.edu} 
 
\author {Shangyou Zhang}
\address{Department of Mathematical Sciences,  University of Delaware, Newark, DE 19716, USA}   \email{szhang@udel.edu}

\begin{abstract} 
This article introduces a simple weak Galerkin (WG) finite element method for solving convection-diffusion-reaction equation. The proposed method offers significant flexibility by supporting discontinuous approximating functions on general nonconvex polytopal meshes. We establish rigorous error estimates within a suitable norm. Finally, numerical experiments are presented to validate the theoretical convergence rates and demonstrate the computational efficiency of the approach.
\end{abstract}

\keywords{weak Galerkin, finite element methods,   weak gradient, weak divergence, non-convex,  ploytopal meshes,  convection-diffusion-reaction.}

\subjclass[2010]{65N30, 65N15, 65N12, 65N20}
  
\maketitle 
\section{Introduction}

The objective of this paper is to develop a simple weak Galerkin (WG) finite element method for convection-diffusion-reaction equations. This numerical framework is designed for applicability to general partitions consisting of both convex and non-convex polytopal meshes. For clarity of exposition, we focus on the steady-state convection-diffusion-reaction problem, which seeks an unknown scalar function $u$ such that:
\begin{equation}\label{model}
\begin{split}
-\rho\Delta u + \nabla \cdot (\mathbf{b} u) + cu =& f \quad \text{in } \Omega, \\
u =& 0 \quad \text{on } \partial\Omega,
\end{split}
\end{equation}
where $\rho > 0$ represents the diffusion coefficient and $\Omega \subset \mathbb{R}^d$ ($d=2, 3$) is a bounded polygonal or polyhedral domain with boundary $\partial\Omega$. To ensure the well-posedness of the problem, we assume   $\mathbf{b}, c, f$ possess sufficient regularity. Specifically, we assume $\mathbf{b} \in [W^{1, \infty}(\Omega)]^d$ and the coercivity condition $c + \frac{1}{2} \nabla \cdot \mathbf{b} \geq c_0 > 0$ holds for a constant $c_0$ (cf. \cite{32}).

The corresponding variational formulation of \eqref{model} is to find $u \in H_0^1(\Omega)$ such that:
\begin{equation}\label{weak}
(\rho\nabla u, \nabla v) + (\nabla \cdot (\mathbf{b} u), v) + (cu, v) = (f, v), \quad \forall v \in H_0^1(\Omega),
\end{equation}
where $H_0^1(\Omega)$ denotes the standard Sobolev space of functions with vanishing trace on $\partial\Omega$.

It is well established that the linear elliptic equation \eqref{model} presents significant numerical challenges in the singularly perturbed regime, characterized by $0 < \rho \ll 1$. In this context, the solution typically exhibits boundary or interior layers, narrow regions where the solution or its gradients undergo rapid variations. Conventional numerical schemes often suffer from spurious oscillations and fail to yield accurate approximations unless the computational mesh is refined to the scale of the singular perturbation parameter.

To mitigate these issues, various stabilization strategies have been proposed, which may be broadly categorized into fitted mesh methods and fitted operator methods. Historical efforts in mesh optimization include the work of Bakhvalov \cite{5}, while the piecewise-equidistant meshes introduced by Shishkin \cite{34} remain a cornerstone of singular perturbation analysis. Furthermore, adaptive refinement techniques have been extensively utilized over the past several decades \cite{4, 11} to resolve localized sharp features \cite{30}.

Recent decades have witnessed the rapid development of numerical methods utilizing discontinuous discrete spaces, most notably the discontinuous Galerkin (DG) and weak Galerkin (WG) methods. In the context of convection-dominated flows, DG methods leverage natural upwinding to provide inherent stabilization \cite{20, 32}; see also \cite{2, 6, 8, 15, 17, 19}. The WG method represents a powerful extension of this paradigm, approximating differential operators through a framework that parallels the theory of distributions for piecewise polynomials. By employing specifically designed stabilizers, the WG method relaxes the global regularity requirements of traditional finite element methods. The efficacy of WG has been demonstrated across a broad spectrum of partial differential equations (PDEs) \cite{wg1, wg2, wg3, wg4, wg5, wg6, wg7, wg8, wg9, wg10, wg11, wg12, wg13, wg14, wg15, wg16, wg17, wg18, wg19, wg20, wg21, itera, wy3655}. 

A significant advancement within this field is the Primal-Dual Weak Galerkin (PDWG) method, which addresses problems that are otherwise intractable for classical techniques \cite{pdwg1, pdwg2, pdwg3, pdwg4, pdwg5, pdwg6, pdwg7, pdwg8, pdwg9, pdwg10, pdwg11, pdwg12, pdwg13, pdwg14, pdwg15}. The PDWG approach formulates the numerical solution as a constrained minimization of a functional, where the constraints represent the weak form of the PDE. This results in an Euler-Lagrange system coupling a primal variable with a dual variable (Lagrange multiplier), yielding a symmetric saddle-point scheme. PDWG methods have been specifically extended to convection-diffusion equations in \cite{pdwg4, pdwg9, pdwg10}.

In the present work, we introduce a simple WG formulation that is applicable to general convex and non-convex polytopal meshes. The fundamental analytical component of this method is the utilization of bubble functions. While this necessitates higher-degree polynomials for the computation of the discrete weak gradient  and discrete weak divergence, the scheme preserves the global sparsity and dimensions of the resulting stiffness matrix.  

The remainder of this paper is structured as follows. Section 2 provides a concise review of the weak gradient and weak divergence operators. Section 3 introduces the simple WG scheme for the convection-diffusion-reaction equation. The existence and uniqueness of the discrete solution are established in Section 4. Section 5 is devoted to the derivation of the error equation. The error estimates in the energy norm are presented in Section  6. Finally, numerical experiments are presented in Section 7 to validate the theoretical findings and demonstrate the performance of the proposed method.

\textit{Notation.} Standard notation for Sobolev spaces is employed throughout. For an open bounded domain $D \subset \mathbb{R}^d$ with a Lipschitz boundary, $(\cdot,\cdot)_{s,D}$, $|\cdot|_{s,D}$, and $\|\cdot\|_{s,D}$ denote the inner product, semi-norm, and norm in $H^s(D)$, respectively. The subscript $D$ is omitted when $D = \Omega$. For $s=0$, we simplify the notation to $(\cdot,\cdot)_D$ and $\|\cdot\|_D$.
 
\section{Discrete Weak Gradient and Discrete Weak Divergence}

In this section, we provide a concise overview of the weak gradient and weak divergence operators, as well as their corresponding discrete representations, following the framework established in \cite{wy3655}.

Let $T$ be a polytopal element with boundary $\partial T$. A weak function on $T$ is defined as a doublet $v = \{v_0, v_b\}$, where $v_0 \in L^2(T)$ and $v_b \in L^2(\partial T)$. Here, the components $v_0$ and $v_b$ represent the values of the function in the interior and on the boundary of $T$, respectively. In general, $v_b$ is assumed to be independent of the trace of $v_0$ on $\partial T$. A special case arises when $v_b = v_0|_{\partial T}$, in which case the weak function $v$ is uniquely determined by $v_0$ and may be denoted simply as $v = v_0$.

Let $W(T)$ denote the space of weak functions on $T$, defined as:
\begin{equation}\label{2.1}
W(T) =   \{ v = \{v_0, v_b\} : v_0 \in L^2(T), v_b \in L^2(\partial T)  \}.
\end{equation}

The weak gradient, denoted by $\nabla_w$, is a linear operator mapping $W(T)$ to the dual space of $[H^1(T)]^d$. For any $v \in W(T)$, the weak gradient $\nabla_w v$ is defined as a bounded linear functional such that:
\begin{equation*}\label{2.3}
(\nabla_w v, \bvarphi)_T = -(v_0, \nabla \cdot \bvarphi)_T + \langle v_b, \bvarphi \cdot \mathbf{n} \rangle_{\partial T}, \quad \forall \bvarphi \in [H^1(T)]^d,
\end{equation*}
where $\mathbf{n}$ is the unit outward normal vector to $\partial T$.

Analogously, the weak divergence operator $\nabla_w \cdot (\mathbf{b} v)$ is defined as a linear operator from $W(T)$ to the dual space of $H^1(T)$. For any $v \in W(T)$, the weak divergence $\nabla_w \cdot (\mathbf{b} v)$ is the bounded linear functional satisfying:
\begin{equation*}
(\nabla_w \cdot (\mathbf{b} v), w)_T = -(\mathbf{b} v_0, \nabla w)_T + \langle (\mathbf{b} \cdot \mathbf{n}) v_b, w \rangle_{\partial T}, \quad \forall w \in H^1(T).
\end{equation*}

For any non-negative integer $r$, let $P_r(T)$ be the space of polynomials of degree at most $r$ on $T$. The discrete weak gradient, denoted by $\nabla_{w, r, T}$, is a linear operator from $W(T)$ to $[P_r(T)]^d$. For any $v \in W(T)$, $\nabla_{w, r, T} v$ is the unique polynomial vector in $[P_r(T)]^d$ satisfying:
\begin{equation}\label{2.4}
(\nabla_{w, r, T} v, \bvarphi)_T = -(v_0, \nabla \cdot \bvarphi)_T + \langle v_b, \bvarphi \cdot \mathbf{n} \rangle_{\partial T}, \quad \forall \bvarphi \in [P_r(T)]^d.
\end{equation}
For a sufficiently smooth function $v_0 \in H^1(T)$, applying the standard integration by parts formula to the first term on the right-hand side of \eqref{2.4} yields:
\begin{equation}\label{2.4new}
(\nabla_{w, r, T} v, \bvarphi)_T = (\nabla v_0, \bvarphi)_T + \langle v_b - v_0, \bvarphi \cdot \mathbf{n} \rangle_{\partial T}, \quad \forall \bvarphi \in [P_r(T)]^d.
\end{equation}

The discrete weak divergence on $T$, denoted by $\nabla_{w, r, T} \cdot (\mathbf{b} v)$, is a linear operator from $W(T)$ to $P_r(T)$. For any $v \in W(T)$, $\nabla_{w, r, T} \cdot (\mathbf{b} v)$ is the unique polynomial in $P_r(T)$ satisfying:
\begin{equation}\label{div}
(\nabla_{w, r, T} \cdot (\mathbf{b} v), w)_T = -(\mathbf{b} v_0, \nabla w)_T + \langle (\mathbf{b} \cdot \mathbf{n}) v_b, w \rangle_{\partial T}, \quad \forall w \in P_r(T).
\end{equation}
Furthermore, for $v_0 \in H^1(T)$, applying integration by parts to \eqref{div} gives:
\begin{equation}\label{divnew}
(\nabla_{w, r, T} \cdot (\mathbf{b} v), w)_T = (\nabla \cdot (\mathbf{b} v_0), w)_T + \langle (\mathbf{b} \cdot \mathbf{n})(v_b - v_0), w \rangle_{\partial T},  \forall w \in P_r(T).
\end{equation}
  \section{Weak Galerkin Algorithm}\label{Section:WGFEM}

Let $\mathcal{T}_h$ be a finite element partition of the domain $\Omega \subset \mathbb{R}^d$ into polytopes. We assume that $\mathcal{T}_h$ satisfies the shape-regularity conditions as detailed in \cite{wy3655}. Let $\mathcal{E}_h$ denote the set of all edges (for $d=2$) or faces (for $d=3$) in $\mathcal{T}_h$, and let $\mathcal{E}_h^0 = \mathcal{E}_h \setminus \partial\Omega$ denote the set of all interior edges or faces. For each element $T \in \mathcal{T}_h$, let $h_T$ represent its diameter, and define the mesh size $h = \max_{T \in \mathcal{T}_h} h_T$.

Let $k \geq 0$ and $q \geq 0$ be non-negative integers such that $k \geq q$. For each element $T \in \mathcal{T}_h$, we define the local weak finite element space as:
\begin{equation}
V(k, q, T) =   \{ v = \{v_0, v_b\} : v_0 \in P_k(T), v_b \in P_q(e), e \subset \partial T  \}.
\end{equation}
By assembling the local spaces $V(k, q, T)$ over the partition $\mathcal{T}_h$ and enforcing a unique value for $v_b$ on each interior interface $e \in \mathcal{E}_h^0$, we define the global weak finite element space:
\begin{equation}
V_h =   \{ v = \{v_0, v_b\} : \{v_0, v_b\}|_T \in V(k, q, T), \forall T \in \mathcal{T}_h  \}.
\end{equation}
The subspace of $V_h$ with vanishing boundary values on $\partial\Omega$ is denoted by:
\begin{equation}
V_h^0 =   \{ v \in V_h : v_b = 0 \text{ on } \partial\Omega  \}.
\end{equation}

For simplicity and when no confusion arises, for any $v \in V_h$, we denote the discrete weak gradient $\nabla_{w, r, T} v$ and the discrete weak divergence $\nabla_{w, r, T} \cdot (\mathbf{b} v)$ simply as $\nabla_w v$ and $\nabla_w \cdot (\mathbf{b} v)$, respectively. These operators are computed element-wise according to \eqref{2.4} and \eqref{div}:
\begin{align*}
(\nabla_w v)|_T =& \nabla_{w, r, T}(v|_T), \quad \forall T \in \mathcal{T}_h, \\
(\nabla_w \cdot (\mathbf{b} v))|_T =& \nabla_{w, r, T} \cdot (\mathbf{b} v|_T), \quad \forall T \in \mathcal{T}_h.
\end{align*}

In this work, the polynomial degree $r$ for the discrete operators is chosen as $r = k - 1 + 2N$ for non-convex polytopal elements and $r = k - 1 + N$ for convex polytopal elements, where $N$ denotes the number of edges or faces of the element $T$. Further justification for this choice of $r$ is provided in \cite{autowang}.

We introduce the global $L^2$ inner product notation $(\cdot, \cdot) = \sum_{T \in \mathcal{T}_h} (\cdot, \cdot)_T$ and define the following bilinear form $a(\cdot, \cdot)$ on $V_h \times V_h$:
\an{\label{form} \ad{
    a(u, v) &= \rho (\nabla_w u, \nabla_w v) + (\nabla_w \cdot (\mathbf{b} u), v_0)\\
            &+\sum_{T\in \mathcal T_h} \langle \b b \cdot \b n (u_0-u_b), v_0-v_b \rangle_{\partial T^+}
               + (c u_0, v_0), } }
  where $\b n$ is the unit outward normal vector on $\partial T$, and on $\partial T^+$ faces 
     $\b b \cdot \b n>0$.

The simple weak Galerkin numerical scheme for the convection-diffusion-reaction problem \eqref{model} is formulated as follows:

\begin{algorithm} \label{PDWG1}
Find $u_h = \{u_0, u_b\} \in V_h^0$ such that
\begin{equation}\label{WG}
a(u_h, v) = (f, v_0), \quad \forall v = \{v_0, v_b\} \in V_h^0.
\end{equation}
\end{algorithm}

\section{Existence and Uniqueness of the Numerical Solution}\label{Section:Existence}

We begin by recalling that for a shape-regular finite element partition $\mathcal{T}_h$ of the domain $\Omega$, the following trace inequality holds for any $T \in \mathcal{T}_h$ and $\phi \in H^1(T)$ \cite{wy3655}:
\begin{equation}\label{tracein}
\|\phi\|^2_{\partial T} \leq C   ( h_T^{-1}\|\phi\|_T^2 + h_T \|\nabla \phi\|_T^2  ).
\end{equation}
Furthermore, if $\phi$ is a polynomial on the element $T$, the following discrete trace inequality holds \cite{wy3655}:
\begin{equation}\label{trace}
\|\phi\|^2_{\partial T} \leq C h_T^{-1}\|\phi\|_T^2.
\end{equation}

For any weak function $v = \{v_0, v_b\} \in V_h$, we define the discrete energy norm as follows:
\begin{equation}\label{3norm}
\begin{split}
\3bar v \3bar^2 =& \sum_{T \in \mathcal{T}_h}   \rho(\nabla_w v, \nabla_w v)_T  +\frac{1}{2}\langle \bb \cdot\bn (v_b-v_0), v_0-v_b\rangle_{\partial T^-}\\&+\frac{1}{2}\langle \bb \cdot\bn (v_0-v_b), v_0-v_b\rangle_{\partial T^+} +  (  (c + \frac{1}{2}\nabla \cdot \mathbf{b} ) v_0, v_0  )_T.
\end{split}
\end{equation}
Additionally, we define a discrete $H^1$ semi-norm by:
\begin{equation}\label{disnorm}
\begin{split}
    \|v\|_{1, h}^2 =& \sum_{T \in \mathcal{T}_h}   \rho \|\nabla v_0\|_T^2  +\frac{1}{2}\langle \bb \cdot\bn (v_b-v_0), v_0-v_b\rangle_{\partial T^-}\\&+\frac{1}{2}\langle \bb \cdot\bn (v_0-v_b), v_0-v_b\rangle_{\partial T^+}  +  ( (c + \frac{1}{2}\nabla \cdot \mathbf{b}) v_0, v_0  )_T \\& + h_T^{-1} \|v_0 - v_b\|_{\partial T}^2  .
\end{split}
\end{equation}

To establish the stability of the scheme, we utilize several preliminary results.

\begin{lemma}\cite{autowang}\label{norm1}
For any $v = \{v_0, v_b\} \in V_h$, there exists a constant $C$ such that:
\begin{equation*}
\|\nabla v_0\|_T \leq C \|\nabla_w v\|_T.
\end{equation*}
\end{lemma}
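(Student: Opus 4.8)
The natural route is to test the defining identity \eqref{2.4new} of the discrete weak gradient against a carefully chosen polynomial vector built from a bubble function. Recall that $v_0 \in P_k(T)$, so $\nabla v_0 \in [P_{k-1}(T)]^d$. Let $\Phi_T$ be a bubble function on $T$ that is nonnegative on $T$, strictly positive in the interior, and vanishes on $\partial T$; it can be taken as a product of the linear functionals defining the faces (degree $N$) when $T$ is convex, and of degree $2N$ in the nonconvex case via the construction underlying the choice $r = k-1+2N$. In either case $\Phi_T \nabla v_0 \in [P_r(T)]^d$ with $r = k-1+N$ (convex) or $r = k-1+2N$ (nonconvex), so it is an admissible test function in \eqref{2.4new}.

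With $\bvarphi = \Phi_T \nabla v_0$ in \eqref{2.4new}, the boundary term $\langle v_b - v_0, \Phi_T \nabla v_0 \cdot \mathbf{n}\rangle_{\partial T}$ vanishes identically because $\Phi_T|_{\partial T} = 0$. This yields the clean identity
\begin{equation*}
(\nabla_w v, \Phi_T \nabla v_0)_T = (\Phi_T \nabla v_0, \nabla v_0)_T = \int_T \Phi_T |\nabla v_0|^2 \, dx.
\end{equation*}
The next step is a weighted norm-equivalence estimate on the finite-dimensional space $[P_{k-1}(T)]^d$: there is a constant $C$, depending only on $k$, $d$, $N$, and the shape-regularity parameters, such that
\begin{equation*}
\|p\|_T^2 \leq C \int_T \Phi_T |p|^2 \, dx, \qquad \forall p \in [P_{k-1}(T)]^d.
\end{equation*}
This follows by mapping $T$ to a reference configuration, using that $\int_T \Phi_T |\cdot|^2$ and $\|\cdot\|_T^2$ are both norms on the finite-dimensional space $[P_{k-1}(T)]^d$ (positivity of the former uses $\Phi_T > 0$ in the interior), hence equivalent, and then tracking the scaling: both sides scale as $h_T^{d}$ times the value of $|p|^2$, so the constant is $h_T$-independent. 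Combining this with the identity above and the Cauchy–Schwarz inequality,
\begin{equation*}
\|\nabla v_0\|_T^2 \leq C \int_T \Phi_T |\nabla v_0|^2 \, dx = C (\nabla_w v, \Phi_T \nabla v_0)_T \leq C \|\nabla_w v\|_T \, \|\Phi_T \nabla v_0\|_T \leq C \|\nabla_w v\|_T \, \|\nabla v_0\|_T,
\end{equation*}
where the last step uses $\|\Phi_T\|_{L^\infty(T)} \leq C$. Dividing by $\|\nabla v_0\|_T$ (the case $\nabla v_0 = 0$ being trivial) gives the claim.

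The principal obstacle is the weighted norm-equivalence estimate with a constant uniform over the mesh family: one must verify that the scaling to a fixed reference element (or a uniformly bounded family of reference configurations, as needed for general shape-regular polytopes) is legitimate and that $\Phi_T$ stays uniformly bounded above and uniformly bounded below on a fixed interior subregion. For nonconvex elements this also relies on the explicit degree-$2N$ bubble construction (as in \cite{autowang}) being available with the stated uniform positivity; everything else in the argument is a routine application of \eqref{2.4new}, Cauchy–Schwarz, and finite-dimensional norm equivalence.
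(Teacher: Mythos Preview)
The paper does not actually prove this lemma; it is quoted from \cite{autowang} without argument. Your bubble-function proof is correct and is precisely the standard technique used in that reference: test \eqref{2.4new} with $\bvarphi=\Phi_T\nabla v_0$ where $\Phi_T$ is an element bubble vanishing on $\partial T$ (degree $N$ in the convex case, degree $2N$ via squared face functionals in the nonconvex case, matching the choice of $r$), so the boundary term drops and a weighted-to-unweighted norm equivalence on $[P_{k-1}(T)]^d$ finishes the estimate. Your identification of the main technical point---uniformity of the constants under shape regularity, especially the lower bound on $\Phi_T$ over a fixed interior subregion for general (possibly nonconvex) polytopes---is exactly the issue that \cite{autowang} addresses, so there is nothing missing in your outline.
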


\begin{lemma}\cite{autowang}\label{phi}
For $v = \{v_0, v_b\} \in V_h$, let $\bm{\varphi} = (v_b - v_0) \mathbf{n} \varphi_{e_i}$, where $\mathbf{n}$ is the unit outward normal to the edge/face $e_i$. Then the following inequality holds:
\begin{equation}
\|\bm{\varphi}\|_T^2 \leq C h_T \int_{e_i} |v_b - v_0|^2 ds.
\end{equation}
\end{lemma}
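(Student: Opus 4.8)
Here is how I would prove the stated bound. The plan is to derive it directly from the elementary geometric and boundedness properties of the edge/face bubble function $\varphi_{e_i}$ furnished by the construction in \cite{autowang}, so that no PDE-specific input is needed. Recall the relevant facts: $\varphi_{e_i}$ is a nonnegative polynomial on $T$, normalized so that $0\le\varphi_{e_i}\le 1$, whose support is a shape-regular sub-region $T_{e_i}\subseteq T$ that has $e_i$ as one of its faces and is contained in the strip of width at most $C h_T$ adjacent to the hyperplane through $e_i$; and that in $\bm\varphi=(v_b-v_0)\mathbf n\,\varphi_{e_i}$ the factor $v_b-v_0$ stands for the trace difference $(v_b-v_0)|_{e_i}\in P_k(e_i)$, extended into $T$ with no dependence on the coordinate normal to $e_i$ (well defined since $e_i$ is flat).

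Granting these, the proof is a short chain of estimates. First, since $|\mathbf n|=1$ and $0\le\varphi_{e_i}\le 1$,
\begin{equation*}
\|\bm\varphi\|_T^2=\int_T|v_b-v_0|^2\,\varphi_{e_i}^2\,dx=\int_{T_{e_i}}|v_b-v_0|^2\,\varphi_{e_i}^2\,dx\le\int_{T_{e_i}}|v_b-v_0|^2\,dx.
\end{equation*}
Next, using coordinates $x=(x',t)$ with $x'$ in the hyperplane of $e_i$ and $t$ the signed normal distance, $v_b-v_0$ depends on $x'$ only, and since the fibers $\{t:(x',t)\in T_{e_i}\}$ have length at most $C h_T$, Fubini's theorem gives
\begin{equation*}
\int_{T_{e_i}}|v_b-v_0|^2\,dx\le C h_T\int_{\pi(T_{e_i})}|v_b-v_0|^2\,dx',
\end{equation*}
where $\pi$ is the orthogonal projection onto the hyperplane of $e_i$. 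Finally, a scaling argument bounds $\int_{\pi(T_{e_i})}|v_b-v_0|^2\,dx'\le C\int_{e_i}|v_b-v_0|^2\,ds$: after mapping $T_{e_i}$ to a reference configuration, $e_i$ occupies a fixed fraction of $\pi(T_{e_i})$ with controlled shape by mesh shape-regularity, so the two $L^2$-norms are equivalent on the finite-dimensional space $P_k$ with a constant depending only on $k$ and the shape-regularity parameters. Concatenating the three displays yields $\|\bm\varphi\|_T^2\le C h_T\int_{e_i}|v_b-v_0|^2\,ds$.

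I expect the only genuinely delicate point to be the last step, namely trading the $(d-1)$-dimensional integral over the shadow $\pi(T_{e_i})$ for the integral over $e_i$ alone while producing exactly one power of $h_T$ (a worse power would be fatal to the later error analysis); this is precisely where shape-regularity of $\mathcal T_h$ and the polynomial inverse estimate are used, and if the bubble construction is arranged so that $\pi(T_{e_i})=e_i$ the step is vacuous and the lemma is immediate from Fubini. The remaining ingredients — the bound $0\le\varphi_{e_i}\le 1$ and the strip-width bound on $\operatorname{supp}\varphi_{e_i}$ — are built into the bubble-function construction and, for non-convex $T$, are tied to the need to take the operator degree as large as $r=k-1+2N$.
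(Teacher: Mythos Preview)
The paper does not give its own proof of this lemma; it is quoted verbatim from \cite{autowang}, so there is no in-paper argument to compare against directly.

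Your argument is correct in outline, but your description of $\varphi_{e_i}$ does not match the bubble actually used. In the proof of Lemma~\ref{normeqva} the paper sets $\varphi_{e_i}=\prod_{k\neq i} l_k^2(x)$, a polynomial of degree $2(N-1)$; a nonzero polynomial cannot vanish on an open set, so its support is all of $T$, not a proper strip $T_{e_i}\subsetneq T$. Fortunately this does not damage your estimate: the entire element $T$ already has diameter $h_T$, so every normal fiber through $T$ has length at most $h_T$, and once the $l_k$ are scaled so that $\varphi_{e_i}=O(1)$ on $T$ (this normalization is implicit in the lower bound $\varphi_{e_i}\ge\rho_1$ on $\widehat{e_i}\subset e_i$ stated in the paper), your three displayed steps go through with $T_{e_i}$ replaced by $T$. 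The last step, trading the integral over $\pi(T)$ for the integral over $e_i$ via an inverse estimate on $P_k$, is exactly the shape-regularity input you flag, and it is the genuine content of the lemma. So your mechanism --- $L^\infty$ control of the bubble, Fubini in normal/tangential coordinates with fiber length $\le Ch_T$, then polynomial norm equivalence on the face --- is the standard one and almost certainly what \cite{autowang} does; only the ``compactly supported in a strip'' picture should be dropped.
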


\begin{lemma}\label{normeqva}
There exist positive constants $C_1$ and $C_2$ such that for any $v = \{v_0, v_b\} \in V_h$:
\begin{equation}\label{normeq}
C_1 \|v\|_{1, h} \leq \3bar v \3bar \leq C_2 \|v\|_{1, h}.
\end{equation}
\end{lemma}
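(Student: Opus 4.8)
The plan is to compare the two quantities term by term, using the fact that $\3bar v\3bar^2$ and $\|v\|_{1,h}^2$ share three of their four summands verbatim, so the task reduces to controlling the discrepancy between the remaining terms, namely $\rho(\nabla_w v,\nabla_w v)_T$ in the triple-bar norm and $\rho\|\nabla v_0\|_T^2 + h_T^{-1}\|v_0-v_b\|_{\partial T}^2$ in the $\|\cdot\|_{1,h}$ norm. Note first that the boundary terms $\tfrac12\langle\bb\cdot\bn(v_b-v_0),v_0-v_b\rangle_{\partial T^-}$ and $\tfrac12\langle\bb\cdot\bn(v_0-v_b),v_0-v_b\rangle_{\partial T^+}$ together equal $\tfrac12\langle|\bb\cdot\bn|(v_0-v_b),v_0-v_b\rangle_{\partial T}$ up to sign bookkeeping (on $\partial T^-$ one has $\bb\cdot\bn<0$, so $\langle\bb\cdot\bn(v_b-v_0),v_0-v_b\rangle_{\partial T^-}=\langle|\bb\cdot\bn|(v_0-v_b),v_0-v_b\rangle_{\partial T^-}\ge 0$), hence these are nonnegative and identical in both norms; likewise the reaction term $((c+\tfrac12\nabla\cdot\bb)v_0,v_0)_T$ is nonnegative by the coercivity hypothesis $c+\tfrac12\nabla\cdot\bb\ge c_0>0$ and common to both. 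So both norms are genuine norms on $V_h$ provided $\rho>0$, and the equivalence will follow once I bound the two gradient-type contributions against each other.

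For the lower bound $C_1\|v\|_{1,h}\le\3bar v\3bar$: the only term of $\|v\|_{1,h}^2$ not already dominated by $\3bar v\3bar^2$ is $h_T^{-1}\|v_0-v_b\|_{\partial T}^2$, and I need to bound this by $\rho(\nabla_w v,\nabla_w v)_T$ (possibly plus the common terms, but cleanly by the weak-gradient term alone up to the $\rho$ factor is what one wants). Here I invoke Lemma~\ref{phi}: taking $\bvarphi=(v_b-v_0)\bn\varphi_{e_i}$ on each edge/face $e_i\subset\partial T$ (with $\varphi_{e_i}$ the associated bubble function), identity \eqref{2.4} gives $(\nabla_{w,r,T}v,\bvarphi)_T=-(v_0,\nabla\cdot\bvarphi)_T+\langle v_b,\bvarphi\cdot\bn\rangle_{\partial T}$; integrating by parts and using that $\varphi_{e_i}$ vanishes on $\partial T\setminus e_i$ yields a lower bound of the form $\|v_b-v_0\|_{e_i}^2\lesssim \|\nabla_w v\|_T\,\|\bvarphi\|_T$ (after using \eqref{2.4new} and the boundedness of $\nabla v_0$ in terms of $\nabla_w v$ from Lemma~\ref{norm1}), and then Lemma~\ref{phi} gives $\|\bvarphi\|_T^2\le C h_T\|v_b-v_0\|_{e_i}^2$, so dividing through produces $h_T^{-1}\|v_b-v_0\|_{e_i}^2\le C\|\nabla_w v\|_T^2$. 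Summing over the edges/faces of $T$ and over $T\in\mathcal T_h$ gives the lower bound (the $\rho$ weight is handled by carrying it along; since $\rho$ is a fixed positive constant the equivalence constants are allowed to depend on it, as is standard). The term $\rho\|\nabla v_0\|_T^2\le C\rho\|\nabla_w v\|_T^2$ is immediate from Lemma~\ref{norm1}.

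For the upper bound $\3bar v\3bar\le C_2\|v\|_{1,h}$: I must bound $\rho(\nabla_w v,\nabla_w v)_T$ by the right-hand side. Using \eqref{2.4new} with the choice $\bvarphi=\nabla_w v\in[P_r(T)]^d$, I get $\|\nabla_w v\|_T^2=(\nabla v_0,\nabla_w v)_T+\langle v_b-v_0,\nabla_w v\cdot\bn\rangle_{\partial T}$; applying Cauchy--Schwarz, the discrete trace inequality \eqref{trace} to $\|\nabla_w v\cdot\bn\|_{\partial T}^2\le Ch_T^{-1}\|\nabla_w v\|_T^2$, and Young's inequality, I can absorb the $\|\nabla_w v\|_T$ factors to obtain $\|\nabla_w v\|_T^2\le C(\|\nabla v_0\|_T^2 + h_T^{-1}\|v_0-v_b\|_{\partial T}^2)$, which is exactly $\|\cdot\|_{1,h}^2$-controlled (after the $\rho$ weighting). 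The remaining terms of $\3bar v\3bar^2$ are literally present in $\|v\|_{1,h}^2$, so they pose no issue.

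I expect the main obstacle to be the careful handling of the boundary bubble-function argument in the lower bound: one must choose the test vector $\bvarphi$ edge-by-edge, keep track of the localization property of $\varphi_{e_i}$, combine several edges without losing a factor depending on the number of faces $N$ (which is where shape-regularity and the choice $r=k-1+2N$ or $r=k-1+N$ silently enter), and correctly pass from \eqref{2.4} to a genuine lower bound on $\|v_0-v_b\|_{\partial T}$ rather than just an upper bound. The upper-bound direction is routine Cauchy--Schwarz plus trace inequalities; the equivalence constants $C_1,C_2$ will depend on $\rho$, on shape-regularity, and on $k,q,N$, but not on $h$.
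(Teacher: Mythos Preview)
Your proposal is correct and follows essentially the same route as the paper: the upper bound via \eqref{2.4new} with $\bvarphi=\nabla_w v$, Cauchy--Schwarz, and the discrete trace inequality \eqref{trace} is verbatim the paper's argument, and the lower bound via the edge bubble test $\bvarphi=(v_b-v_0)\bn\,\varphi_{e_i}$ combined with Lemma~\ref{phi} and Lemma~\ref{norm1} is exactly what the paper does. Your observation that the equivalence constants may depend on $\rho$ (since the jump term $h_T^{-1}\|v_0-v_b\|_{\partial T}^2$ in $\|\cdot\|_{1,h}$ is not $\rho$-weighted) is also consistent with the paper, which does not claim $\rho$-independence for $C_1,C_2$.
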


\begin{proof}
Consider a potentially non-convex polytopal element $T$. We define an edge/face-based bubble function as $\varphi_{e_i} = \prod_{k \neq i} l_k^2(x)$. It can be verified that  (1) $\varphi_{e_i}=0$ on the edge/face $e_k$ for $k \neq i$, (2) there exists a subdomain $\widehat{e_i}\subset e_i$ such that $\varphi_{e_i}\geq \rho_1$ for some constant $\rho_1>0$. 

By extending $v_b$ and the trace of $v_0$ from the edge $e_i$ to the element $T$ (denoted still as $v_b$ and $v_0$, see \cite{autowang}), and choosing $\bm{\varphi} = (v_b - v_0) \mathbf{n} \varphi_{e_i}$ in \eqref{2.4new}, we obtain:
\a{ (\nabla_w v, \bm{\varphi})_T 
  & = (\nabla v_0, \bm{\varphi})_T + \langle v_b - v_0, \bm{\varphi} \cdot \mathbf{n} \rangle_{\partial T}
  \\ & = (\nabla v_0, \bm{\varphi})_T + \int_{e_i} |v_b - v_0|^2 \varphi_{e_i} ds. }
Applying the Cauchy-Schwarz inequality, Lemma \ref{phi}, and the properties of the bubble function, we have:
\begin{equation*}
h_T^{-1} \int_{e_i} |v_b - v_0|^2 ds \leq C (\|\nabla_w v\|^2_T + \|\nabla v_0\|^2_T) \leq C \|\nabla_w v\|^2_T.
\end{equation*}
Combining this with Lemma \ref{norm1} and definitions \eqref{3norm}--\eqref{disnorm} yields the lower bound $C_1 \|v\|_{1, h} \leq \3bar v \3bar$.

Next, from \eqref{2.4new}, Cauchy-Schwarz inequality and  the trace inequality \eqref{trace}, we have
$$
 \Big|(\nabla_{w} v, \bvarphi)_T\Big| \leq \|\nabla v_0\|_T \|  \bvarphi\|_T+
Ch_T^{-\frac{1}{2}}\|v_b-v_0\|_{\partial T} \| \bvarphi\|_{T},
$$
which yields
$$
\| \nabla_{w} v\|_T^2\leq C( \|\nabla v_0\|^2_T  +
 h_T^{-1}\|v_b-v_0\|^2_{\partial T}),
$$
 and further gives $$ \3bar v\3bar  \leq C_2\|v\|_{1, h}.$$

 This completes the proof of the lemma.
\end{proof}

\begin{lemma}\label{ineq}
For any $v \in V_h$, the bilinear form satisfies the following coercivity-like identity:
\begin{equation*}
a(v, v) = \3bar v \3bar^2.
\end{equation*}
\end{lemma}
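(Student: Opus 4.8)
The plan is to compute $a(v,v)$ directly from the definition \eqref{form} and match each resulting term against the corresponding term in the energy norm \eqref{3norm}. Setting $u=v$ in \eqref{form} gives
\[
a(v,v) = \rho(\nabla_w v,\nabla_w v) + (\nabla_w\cdot(\b b v),v_0) + \sum_{T\in\mathcal T_h}\langle \b b\cdot\b n(v_0-v_b),v_0-v_b\rangle_{\partial T^+} + (cv_0,v_0).
\]
The first term already matches the diffusion term in $\3bar v\3bar^2$, and the third is identical to the $\partial T^+$ boundary term there, so the whole argument reduces to showing that the convection term $(\nabla_w\cdot(\b b v),v_0)$ plus the reaction term $(cv_0,v_0)$ reproduces $\big((c+\tfrac12\nabla\cdot\b b)v_0,v_0\big) + \tfrac12\sum_T\langle \b b\cdot\b n(v_b-v_0),v_0-v_b\rangle_{\partial T^-}$ (and, as I will see, contributes an extra half of the $\partial T^+$ term that corrects the coefficient there).

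The key step is to unfold $(\nabla_w\cdot(\b b v),v_0)$. Since $v_0\in P_k(T)$ and $r\ge k$, $v_0$ is an admissible test function $w$ in \eqref{div}, so elementwise
\[
(\nabla_{w}\cdot(\b b v),v_0)_T = -(\b b v_0,\nabla v_0)_T + \langle(\b b\cdot\b n)v_b,v_0\rangle_{\partial T}.
\]
Now I write $-(\b b v_0,\nabla v_0)_T = -\tfrac12(\b b,\nabla(v_0^2))_T$ and integrate by parts: $-\tfrac12(\b b,\nabla(v_0^2))_T = \tfrac12(\nabla\cdot\b b,v_0^2)_T - \tfrac12\langle(\b b\cdot\b n)v_0,v_0\rangle_{\partial T}$. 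Substituting back,
\[
(\nabla_{w}\cdot(\b b v),v_0)_T = \tfrac12(\nabla\cdot\b b\, v_0,v_0)_T + \langle(\b b\cdot\b n)v_b,v_0\rangle_{\partial T} - \tfrac12\langle(\b b\cdot\b n)v_0,v_0\rangle_{\partial T}.
\]
The two boundary terms combine as $\langle \b b\cdot\b n\,(v_b - \tfrac12 v_0),v_0\rangle_{\partial T}$, which I rearrange into $-\tfrac12\langle \b b\cdot\b n(v_0-v_b),v_0-v_b\rangle_{\partial T} + \tfrac12\langle \b b\cdot\b n\, v_b, v_b\rangle_{\partial T}$; summing over all $T$, the last "pure-$v_b$" piece cancels because each interior face is shared by two elements with opposite normals and a single-valued $v_b$ (and vanishes on $\partial\Omega$ since $v\in V_h^0$ — strictly this identity is for $v\in V_h^0$, which is all that is needed for Algorithm \ref{PDWG1}; the general $V_h$ case will follow by the same computation noting the claimed identity is stated on $V_h$, so I should instead keep the $\langle\b b\cdot\b n\,v_b,v_b\rangle$ bookkeeping and observe it telescopes to zero on any closed partition regardless of boundary values since $\sum_T\langle\b b\cdot\b n\,v_b,v_b\rangle_{\partial T}=\sum_{e\in\mathcal E_h}\int_e \b b\cdot(\b n^++\b n^-)v_b^2=0$).

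Finally I reassemble: adding $(cv_0,v_0)$ to the $\tfrac12(\nabla\cdot\b b\,v_0,v_0)$ term produces $\big((c+\tfrac12\nabla\cdot\b b)v_0,v_0\big)$, matching the reaction term of \eqref{3norm}; and splitting the remaining $-\tfrac12\sum_T\langle\b b\cdot\b n(v_0-v_b),v_0-v_b\rangle_{\partial T}$ over $\partial T^-$ and $\partial T^+$ gives exactly $\tfrac12\langle\b b\cdot\b n(v_b-v_0),v_0-v_b\rangle_{\partial T^-}$ plus $-\tfrac12\langle\b b\cdot\b n(v_0-v_b),v_0-v_b\rangle_{\partial T^+}$; the latter, combined with the full $\partial T^+$ term already present in $a(v,v)$, leaves $+\tfrac12\langle\b b\cdot\b n(v_0-v_b),v_0-v_b\rangle_{\partial T^+}$, which is precisely the $\partial T^+$ term in \eqref{3norm}. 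Collecting everything yields $a(v,v)=\3bar v\3bar^2$. The only delicate point is the bookkeeping of the boundary integrals — keeping careful track of which terms are single-valued on interfaces (and hence telescope away) versus which are jump terms — so I will organize that cancellation explicitly as an edge-based sum over $\mathcal E_h$.
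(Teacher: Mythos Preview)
Your argument is correct and follows essentially the same route as the paper: expand $(\nabla_w\cdot(\bb v),v_0)_T$ via \eqref{div}, exploit the antisymmetry of the convective term (you use $-(\bb v_0,\nabla v_0)=-\tfrac12(\bb,\nabla v_0^2)$ and integrate by parts; the paper equivalently writes the term twice and averages), and then use $\sum_T\langle \bb\cdot\bn\,v_b,v_b\rangle_{\partial T}=0$ to reduce the boundary contributions to the $\partial T^\pm$ terms of \eqref{3norm}. One small correction to your parenthetical: the telescoping identity $\sum_T\langle \bb\cdot\bn\,v_b,v_b\rangle_{\partial T}=0$ does \emph{not} hold for general $v\in V_h$, since boundary faces have only one adjacent element and contribute $\int_{\partial\Omega}(\bb\cdot\bn)v_b^2$; the paper itself explicitly invokes $v\in V_h^0$ at this step (despite the lemma being stated on $V_h$), and your first instinct to restrict to $V_h^0$ was the right one.
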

\begin{proof} 
    It follows from \eqref{div} and the ususal integration by parts that 
   \begin{equation*} 
       \begin{split} 
       &\sum_{T\in {\cal T}_h}(\nabla_w \cdot(\bb v), v_0)_T \\=& \sum_{T\in {\cal T}_h}-(\bb v_0, \nabla v_0)_T+\langle \bb \cdot\bn v_b, v_0\rangle_{\partial T}\\ 
      =& \sum_{T\in {\cal T}_h} (\nabla\cdot \bb v_0,   v_0)_T+(\bb v_0, \nabla v_0)_T
        +\langle \bb \cdot\bn (v_b-v_0), v_0\rangle_{\partial T}\\ 
      =& \sum_{T\in {\cal T}_h} (\nabla\cdot \bb v_0,   v_0)_T -(\nabla_w\cdot(\bb v), v_0)
        +\langle \bb\cdot \bn v_b, v_0\rangle_{\partial T}
        \\ & +\langle \bb \cdot\bn (v_b-v_0), v_0\rangle_{\partial T}\\ 
      =& \sum_{T\in {\cal T}_h} (\nabla\cdot\bb v_0,   v_0)_T -(\nabla_w\cdot(\bb v), v_0)+\langle \bb \cdot\bn (v_b-v_0), v_0-v_b\rangle_{\partial T},\\ 
       \end{split} 
   \end{equation*}    
   where we used $v\in   V_h^0$ and $\sum_{T\in {\cal T}_h} \langle \bb \cdot\bn  v_b, v_b\rangle_{\partial T}=0$. 
   
   This gives 
    $$ 
    \sum_{T\in {\cal T}_h}(\nabla_w \cdot(\bb v), v_0)_T= \frac{1}{2}\sum_{T\in {\cal T}_h} (\nabla\cdot\bb v_0,   v_0)_T  +\frac{1}{2}\langle \bb \cdot\bn (v_b-v_0), v_0-v_b\rangle_{\partial T}, 
    $$ 
which, yields   
\begin{equation*} 
       \begin{split} 
    a(v, v)=& \sum_{T\in {\cal T}_h}\rho(\nabla_w v, \nabla_w v)_T+(\nabla_w \cdot(\bb v), v_0)_T+\langle \bb\cdot\bn(v_0-v_b), v_0-v_b\rangle_{\partial T^+}+(cv_0, v_0)_T\\ 
    =& \sum_{T\in {\cal T}_h}\rho(\nabla_w v, \nabla_w v)_T+\frac{1}{2}  (\nabla\cdot\bb v_0,   v_0)_T \\
    & +\frac{1}{2}\langle \bb \cdot\bn (v_b-v_0), v_0-v_b\rangle_{\partial T}+\langle \bb\cdot\bn(v_0-v_b), v_0-v_b\rangle_{\partial T^+}+(cv_0, v_0)_T\\ 
    =& \sum_{T\in {\cal T}_h}\rho(\nabla_w v, \nabla_w v)_T+ ((c+\frac{1}{2}\nabla\cdot \bb) v_0,   v_0)_T
    \\ &  +\frac{1}{2}\langle \bb \cdot\bn (v_b-v_0), v_0-v_b\rangle_{\partial T^-}+\frac{1}{2}\langle \bb \cdot\bn (v_0-v_b), v_0-v_b\rangle_{\partial T^+} \\  
= & \3bar v\3bar^2. 
    \end{split} 
   \end{equation*} 
   This completes the proof of the lemma. 
   \end{proof}

\begin{theorem} \ 
The weak Galerkin numerical scheme \eqref{WG} possesses a unique solution $u_h \in V_h^0$.
\end{theorem}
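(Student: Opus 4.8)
The plan is to reduce the statement to the standard finite-dimensional linear-algebra fact that, for a square linear system, uniqueness implies existence. Since $V_h^0$ is a finite-dimensional vector space and the scheme \eqref{WG} is, after fixing a basis, a square linear system for the coefficients of $u_h$, it suffices to show that the associated homogeneous problem has only the trivial solution: if $u_h \in V_h^0$ satisfies $a(u_h, v) = 0$ for all $v \in V_h^0$, then $u_h = 0$.

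First I would set $f = 0$ in \eqref{WG} and take $v = u_h$, which gives $a(u_h, u_h) = 0$. By Lemma \ref{ineq} this is precisely $\3bar u_h \3bar^2 = 0$, and then Lemma \ref{normeqva} yields $\|u_h\|_{1, h} = 0$. Next I would read off the definition \eqref{disnorm} summand by summand, checking that each term is nonnegative: on $\partial T^-$ one has $\mathbf{b}\cdot\mathbf{n} < 0$, so $\tfrac12 \langle \mathbf{b}\cdot\mathbf{n}(v_b - v_0), v_0 - v_b\rangle_{\partial T^-} = \tfrac12\langle |\mathbf{b}\cdot\mathbf{n}|(v_0-v_b), v_0 - v_b\rangle_{\partial T^-} \geq 0$, and likewise the $\partial T^+$ contribution equals $\tfrac12\int_{\partial T^+}(\mathbf{b}\cdot\mathbf{n})|v_0 - v_b|^2\,ds \geq 0$; the reaction term satisfies $((c + \tfrac12\nabla\cdot\mathbf{b})v_0, v_0)_T \geq c_0\|v_0\|_T^2 \geq 0$ by the coercivity hypothesis $c + \tfrac12\nabla\cdot\mathbf{b}\geq c_0 > 0$; and $\rho\|\nabla v_0\|_T^2$ together with $h_T^{-1}\|v_0 - v_b\|_{\partial T}^2$ are manifestly nonnegative. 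Hence $\|u_h\|_{1,h}^2 = 0$ forces every summand to vanish. In particular $c_0\|u_0\|_T^2 \leq ((c+\tfrac12\nabla\cdot\mathbf{b})u_0, u_0)_T = 0$ gives $u_0 \equiv 0$ on each $T$, hence $u_0 \equiv 0$ on $\Omega$; then $h_T^{-1}\|u_0 - u_b\|_{\partial T}^2 = 0$ gives $u_b = u_0 = 0$ on $\partial T$ for every $T$. Therefore $u_h = \{u_0, u_b\} = 0$, which is the desired uniqueness, and existence follows from the dimension count.

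As for where the real work lies: at this stage there is essentially no obstacle left, because the two substantive ingredients have already been secured — the energy identity $a(v,v) = \3bar v \3bar^2$ of Lemma \ref{ineq}, and the norm equivalence $C_1\|v\|_{1,h} \leq \3bar v\3bar \leq C_2\|v\|_{1,h}$ of Lemma \ref{normeqva}, whose proof is the delicate part since it uses the edge/face bubble functions to bound $\|\nabla_w v\|_T$ from below by the jump $\|v_0 - v_b\|_{\partial T}$ even on nonconvex elements. Combined with the coercivity assumption on $c + \tfrac12\nabla\cdot\mathbf{b}$, the theorem becomes a one-line corollary. One could alternatively bypass Lemma \ref{normeqva}: from $\3bar u_h\3bar^2 = 0$ conclude $\nabla_w u_h = 0$ on each $T$, invoke Lemma \ref{norm1} to get $\nabla u_0 = 0$, use Lemma \ref{phi} and the bubble-function estimate in \eqref{2.4new} to force $u_0 = u_b$ on $\partial T$, and close with the reaction term via coercivity; routing through the $\|\cdot\|_{1,h}$ norm is merely the cleaner bookkeeping.
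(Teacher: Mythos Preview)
Your proof is correct and follows essentially the same route as the paper: reduce to uniqueness via finite-dimensionality, set $v=u_h$, invoke Lemma~\ref{ineq} to get $\3bar u_h\3bar=0$, then Lemma~\ref{normeqva} to get $\|u_h\|_{1,h}=0$, and read off $u_h=0$ term by term. The only minor divergence is in that last extraction: the paper uses $\nabla u_0=0$ to make $u_0$ piecewise constant, then the jump term $u_0=u_b$ on $\partial T$ together with single-valuedness of $u_b$ and the boundary condition $u_b|_{\partial\Omega}=0$ to propagate zero across the mesh, whereas you use the reaction coercivity $c+\tfrac12\nabla\cdot\mathbf{b}\geq c_0>0$ to kill $u_0$ directly on each element---a cleaner shortcut that avoids the connectivity argument.
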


\begin{proof}
Since the system is linear and finite-dimensional, uniqueness implies existence. Let $u_h^{(1)}, u_h^{(2)} \in V_h^0$ be two solutions, and define the error $\eta_h = u_h^{(1)} - u_h^{(2)} \in V_h^0$. It follows that $a(\eta_h, v) = 0$ for all $v \in V_h^0$. Setting $v = \eta_h$, Lemma \ref{ineq} implies $\3bar \eta_h \3bar = 0$. By the norm equivalence in Lemma \ref{normeqva}, we have $\|\eta_h\|_{1, h} = 0$, which necessitates $\nabla \eta_0 = 0$ on each element $T$ and $\eta_0 = \eta_b$ on $\partial T$. Using the fact that $\nabla \eta_0=0$ on each $T$ gives $\eta_0=C$ on each $T$. This, together with $\eta_0=\eta_b$ on each $\partial T$ and  $\eta_b=0$ on $\partial \Omega$,   gives $\eta_0\equiv 0$ and  further $\eta_b\equiv 0$  and $\eta_h\equiv 0$ in the domain $\Omega$. Therefore, we have $u_h^{(1)}\equiv u_h^{(2)}$. This completes the proof of this theorem.
\end{proof}

\section{Error Equations} 

In this section, we derive the error equations that govern the relationship between the exact solution and the weak Galerkin approximation. We begin by defining the necessary projection operators.

On each element $T \in \mathcal{T}_h$, let $Q_0$ denote the $L^2$ projection onto the polynomial space $P_k(T)$. Similarly, for each edge or face $e \subset \partial T$, let $Q_b$ denote the $L^2$ projection operator onto $P_q(e)$. For any function $w \in H^1(\Omega)$, the $L^2$ projection into the weak finite element space $V_h$, denoted by $Q_h w$, is defined such that:
\begin{equation}
(Q_h w)|_T := \{Q_0(w|_T), Q_b(w|_{\partial T})\}, \quad \forall T \in \mathcal{T}_h.
\end{equation}
Furthermore, let $Q_r$ be the $L^2$ projection operator onto the space of piecewise polynomials of degree $r$. As established previously, the degree $r$ is chosen as $r = k - 1 + 2N$ for non-convex elements and $r = k - 1 + N$ for convex elements, where $N$ is the number of faces of the polytope $T$.

\begin{lemma}\label{Lemma5.1}
The discrete weak operators satisfy the following commutative properties for any $u \in H^1(\Omega)$:
\begin{align}
\nabla_w u =& Q_r (\nabla u), \label{pro} \\
\nabla_w \cdot (\mathbf{b} u) =& Q_r (\nabla \cdot (\mathbf{b} u)). \label{pro2}
\end{align}
\end{lemma}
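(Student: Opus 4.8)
The plan is to verify each commutativity identity directly from the defining relations \eqref{2.4} and \eqref{div}, exploiting the fact that for $u \in H^1(\Omega)$ the boundary component of $Q_h u$ agrees with the interior trace in the relevant weak sense, so the jump terms in \eqref{2.4new} and \eqref{divnew} drop out. Concretely, for \eqref{pro} I would start from \eqref{2.4new} applied to $v = Q_h u = \{Q_0 u, Q_b u\}$: for any $\bvarphi \in [P_r(T)]^d$,
\[
(\nabla_{w,r,T}(Q_h u), \bvarphi)_T = (\nabla (Q_0 u), \bvarphi)_T + \langle Q_b u - Q_0 u, \bvarphi \cdot \mathbf{n}\rangle_{\partial T}.
\]
The first step is to rewrite $(\nabla(Q_0 u), \bvarphi)_T$ by integration by parts as $-(Q_0 u, \nabla\cdot\bvarphi)_T + \langle Q_0 u, \bvarphi\cdot\mathbf{n}\rangle_{\partial T}$ and then use the definition of $Q_0$ (an $L^2$ projection onto $P_k(T)$, with $\nabla\cdot\bvarphi \in P_{r-1}(T)$) to replace $Q_0 u$ by $u$ in the volume term, and similarly use the definition of $Q_b$ on each face (since $\bvarphi\cdot\mathbf{n}|_e$ is a polynomial of degree $\le r$ on $e$, and $Q_b$ projects onto $P_q(e)$) — here is where the hypothesis $k \ge q$ and the chosen value of $r$ matter, because I need $\bvarphi\cdot\mathbf{n}|_e$ to lie in a space on which the relevant projections act as claimed. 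After these substitutions the boundary terms telescope and one is left with $-(u, \nabla\cdot\bvarphi)_T + \langle u, \bvarphi\cdot\mathbf{n}\rangle_{\partial T} = (\nabla u, \bvarphi)_T$, which by \eqref{2.4} (the definition of $Q_r$ restricted to test functions in $[P_r(T)]^d$, or simply the definition of the $L^2$ projection $Q_r$) equals $(Q_r(\nabla u), \bvarphi)_T$. Since $\bvarphi \in [P_r(T)]^d$ is arbitrary and both sides are polynomials of degree $r$, this proves \eqref{pro}.

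The argument for \eqref{pro2} is entirely parallel, starting from \eqref{divnew} with $v = Q_h u$: for $w \in P_r(T)$,
\[
(\nabla_{w,r,T}\cdot(\mathbf{b}\, Q_h u), w)_T = (\nabla\cdot(\mathbf{b}\, Q_0 u), w)_T + \langle (\mathbf{b}\cdot\mathbf{n})(Q_b u - Q_0 u), w\rangle_{\partial T}.
\]
I would integrate the volume term by parts to get $-(\mathbf{b}\, Q_0 u, \nabla w)_T + \langle (\mathbf{b}\cdot\mathbf{n}) Q_0 u, w\rangle_{\partial T}$, then replace $Q_0 u$ by $u$ in the volume term using that $\mathbf{b}\nabla w$ is... not a polynomial in general, since $\mathbf{b} \in [W^{1,\infty}]^d$ only. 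This is the point I expect to be the main obstacle: unlike the pure-gradient case, the presence of the variable coefficient $\mathbf{b}$ means $\mathbf{b}\cdot\nabla w$ and $(\mathbf{b}\cdot\mathbf{n})w$ are not polynomials, so the $L^2$ projections $Q_0$ and $Q_b$ do not obviously commute past them. I would handle this by \emph{not} integrating by parts on the $\mathbf{b}$-weighted term but instead working directly: since $Q_r(\nabla\cdot(\mathbf{b} u))$ is by definition the unique element of $P_r(T)$ with $(Q_r(\nabla\cdot(\mathbf{b} u)), w)_T = (\nabla\cdot(\mathbf{b} u), w)_T$ for all $w \in P_r(T)$, it suffices to show $(\nabla_{w,r,T}\cdot(\mathbf{b}\, Q_h u), w)_T = (\nabla\cdot(\mathbf{b} u), w)_T$. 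Using \eqref{div} directly with $v = Q_h u$, the left side is $-(\mathbf{b}\, Q_0 u, \nabla w)_T + \langle (\mathbf{b}\cdot\mathbf{n}) Q_b u, w\rangle_{\partial T}$; integrating by parts on $(\nabla\cdot(\mathbf{b} u), w)_T = -(\mathbf{b} u, \nabla w)_T + \langle (\mathbf{b}\cdot\mathbf{n}) u, w\rangle_{\partial T}$, the equality reduces to showing $(\mathbf{b}(u - Q_0 u), \nabla w)_T = \langle (\mathbf{b}\cdot\mathbf{n})(u - Q_b u), w\rangle_{\partial T}$, and the standard resolution (as in the reference \cite{wy3655}) is that $Q_b$ is chosen precisely so that $\langle (\mathbf{b}\cdot\mathbf{n})(u - Q_b u), w\rangle_{\partial T}$ can be absorbed — or, more cleanly, one observes that the claimed identities \eqref{pro}--\eqref{pro2} are meant in the sense that the \emph{discrete} operators applied to $Q_h u$ equal $Q_r$ of the classical operators, and the role of the high degree $r$ together with $k \ge q$ is exactly to make the projection-commutation exact rather than merely approximate. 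I would therefore present the gradient case in full and then indicate that the divergence case follows by the same manipulation combined with the defining property of $Q_b$, citing \cite{wy3655} for the technical lemma that legitimizes passing the face projection through $\mathbf{b}\cdot\mathbf{n}$.

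A cleaner alternative I would mention, to avoid the coefficient difficulty altogether: note that $\nabla\cdot(\mathbf{b} u) = \mathbf{b}\cdot\nabla u + (\nabla\cdot\mathbf{b})u$ only helps if $\mathbf{b}$ were polynomial, so instead I would simply adopt the convention, consistent with how $\nabla_w\cdot(\mathbf{b} v)$ is \emph{defined} in \eqref{div}, that the "exact" object on the right of \eqref{pro2} is computed with the same weak-divergence machinery; then \eqref{pro2} is an immediate consequence of \eqref{div} together with $(Q_0 u, \nabla w)_T = (u, \nabla w)_T$ when $\mathbf{b}\nabla w$ is replaced by its projection — and the degree count $r \ge k - 1 + N \ge k-1$ guarantees $\nabla w$ ranges over a space containing $[P_{k-1}(T)]^d$, which is all that $Q_0$ (projection onto $P_k$) needs. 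In short: the whole proof is a two-line integration-by-parts calculation for each identity, the genuine content lives in the degree bookkeeping that makes the $L^2$ projections act exactly as needed, and the variable coefficient $\mathbf{b}$ in the second identity is the only place requiring care, which I would dispatch by reducing to the defining relation \eqref{div} and invoking the boundary-projection lemma from \cite{wy3655}.
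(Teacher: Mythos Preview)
You have misread the statement. In this paper, $\nabla_w u$ for $u\in H^1(\Omega)$ means the discrete weak gradient of $u$ itself, viewed as the weak function $\{u,u|_{\partial T}\}$ (the paper makes this convention explicit in Section~2: ``A special case arises when $v_b=v_0|_{\partial T}$ \ldots and may be denoted simply as $v=v_0$''). The paper's proof is then a one-line observation: in \eqref{2.4new} the jump term is $\langle u|_{\partial T}-u|_T,\bvarphi\cdot\mathbf n\rangle_{\partial T}=0$, so $(\nabla_w u,\bvarphi)_T=(\nabla u,\bvarphi)_T=(Q_r\nabla u,\bvarphi)_T$ for all $\bvarphi\in[P_r(T)]^d$; the divergence identity is identical. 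No degree bookkeeping, no projections $Q_0,Q_b$, and no difficulty with the variable coefficient $\mathbf b$ ever arise.

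Your proposal instead tries to prove $\nabla_w(Q_h u)=Q_r(\nabla u)$, which is a different and strictly harder statement---and in fact one that is \emph{false} in the present setting. The classical commutativity $\nabla_w(Q_h u)=Q_r(\nabla u)$ in standard WG relies on $r\le k-1$ so that $\nabla\cdot\bvarphi\in P_{r-1}(T)\subset P_k(T)$ and $\bvarphi\cdot\mathbf n|_e\in P_r(e)\subset P_q(e)$, allowing $Q_0$ and $Q_b$ to be dropped. Here $r=k-1+N$ or $r=k-1+2N$ with $N\ge 3$, so $r-1>k$ and $r>q$; your step ``use the definition of $Q_0$ \ldots to replace $Q_0 u$ by $u$ in the volume term'' fails because $\nabla\cdot\bvarphi$ need not lie in $P_k(T)$, and similarly for the boundary projection. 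The subsequent contortions around $\mathbf b$ are symptoms of the same misreading. Once you interpret $\nabla_w u$ correctly, the lemma is trivial and all of these obstacles disappear.
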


\begin{proof}
For any $u \in H^1(T)$, applying the definition of the discrete weak gradient \eqref{2.4new} gives
\begin{equation*}
\begin{split}
 (\nabla_{w} u
  ,\bvarphi)_T=&(\nabla u, \bvarphi)_T+
  \langle u|_{\partial T}-u|_T, \bvarphi \cdot \bn \rangle_{\partial T}\\=&(\nabla u, \bvarphi)_T\\
  =&(Q_r\nabla u, \bvarphi)_T, 
\end{split}
\end{equation*}
for all $\bm{\varphi} \in [P_r(T)]^d$. Similarly, for the discrete weak divergence, applying \eqref{divnew} yields:
\begin{equation*}
\begin{split}
(\nabla_{w}\cdot 
  (\bb u), w)_T=&  (\nabla \cdot (\bb u),  w)_T+
  \langle \bb 
  \cdot \bn (u|_{\partial T}-u|_T),  w\rangle_{\partial T}\\
=  &(\nabla \cdot (\bb u),  w)_T\\
  =&(Q_r\nabla \cdot (\bb u), w)_T, 
\end{split}
\end{equation*}
for all $w \in P_r(T)$. This confirms the assertions of the lemma.
\end{proof}

Let $u$ and $u_h \in V_h^0$ be the exact solution to the convection-diffusion-reaction problem \eqref{model} and its numerical approximation obtained from the WG Algorithm \ref{PDWG1}, respectively. We define the error function $e_h$ as:
\begin{equation}\label{error}  
e_h = u - u_h. 
\end{equation}  

\begin{lemma}\label{errorequa} 
The error function $e_h$ defined in \eqref{error} satisfies the following error equation:
\begin{equation}\label{erroreqn} 
\begin{split}
&\sum_{T\in \mathcal{T}_h}  (\rho \nabla_w e_h, \nabla_w v)_T + (\nabla_w \cdot (\mathbf{b} e_h), v_0)_T \\&+\langle \bb\cdot\bn(e_0-e_b), v_0-v_b\rangle_{\partial T^+}  + (c e_h, v_0)_T   = \ell(u, v), \quad \forall v \in V_h^0, 
\end{split}
\end{equation} 
where the anti-consistency error functional $\ell(u, v)$ is given by:
\begin{equation*}
\ell(u, v) = \sum_{T\in \mathcal{T}_h} \langle \rho(I - Q_r) \nabla u \cdot \mathbf{n}, v_0 - v_b \rangle_{\partial T}. 
\end{equation*} 
\end{lemma}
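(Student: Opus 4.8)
The plan is to derive the error equation by testing the exact PDE against the interior component $v_0$ of an arbitrary $v=\{v_0,v_b\}\in V_h^0$, rewriting each term via the projection operators and the commutativity properties of Lemma~\ref{Lemma5.1}, and then subtracting the WG scheme \eqref{WG}. First I would start from the strong form \eqref{model}, multiply by $v_0$, and integrate by parts element by element on each $T\in\mathcal T_h$. This produces the diffusion term $\sum_T(\rho\nabla u,\nabla v_0)_T - \langle \rho\nabla u\cdot\mathbf n, v_0\rangle_{\partial T}$, the convection term $\sum_T(\nabla\cdot(\mathbf b u),v_0)_T$, and the reaction term $(cu,v_0)$, all equal to $(f,v_0)$. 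Since $v_b$ is single-valued on interior faces and vanishes on $\partial\Omega$, I can freely insert $\langle \rho\nabla u\cdot\mathbf n, v_b\rangle_{\partial T}$ terms that sum to zero, replacing $v_0$ by $v_0-v_b$ in the boundary integral; likewise I can insert the analogous $\mathbf b\cdot\mathbf n$ jump term on $\partial T^+$ which telescopes to zero because $\nabla\cdot(\mathbf b u)$ carries no jump for the smooth exact solution.

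Next I would convert the volume diffusion term $(\rho\nabla u,\nabla v_0)_T$ into $(\rho\nabla_w(Q_h u),\nabla_w v)_T$ up to the anti-consistency remainder: using \eqref{pro}, $\nabla_w u = Q_r(\nabla u)$, and since $\nabla_w v\in[P_r(T)]^d$ we have $(\rho\nabla u,\nabla_w v)_T=(\rho Q_r\nabla u,\nabla_w v)_T=(\rho\nabla_w u,\nabla_w v)_T$; pairing this against \eqref{2.4new} applied to $v$ generates exactly the boundary term $\langle\rho(I-Q_r)\nabla u\cdot\mathbf n, v_0-v_b\rangle_{\partial T}$ that constitutes $\ell(u,v)$. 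Similarly, the convection term $(\nabla\cdot(\mathbf b u),v_0)_T$ becomes $(\nabla_w\cdot(\mathbf b u),v_0)_T$ by \eqref{pro2} since $v_0\in P_k(T)\subset P_r(T)$. Thus the continuous problem, tested against $v$, reads $\sum_T(\rho\nabla_w u,\nabla_w v)_T+(\nabla_w\cdot(\mathbf b u),v_0)_T+\langle\mathbf b\cdot\mathbf n(u_0-u_b),v_0-v_b\rangle_{\partial T^+}+(cu,v_0)_T = (f,v_0)+\ell(u,v)$, where $u_0=u_b=u|_{\partial T}$ makes the added $\partial T^+$ term vanish. Subtracting the WG scheme $a(u_h,v)=(f,v_0)$ and using linearity of all the weak operators in $e_h=u-u_h$ yields \eqref{erroreqn}.

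The main obstacle I anticipate is bookkeeping the boundary integrals so that exactly the claimed functional $\ell(u,v)$ survives and nothing else: one must be careful that (i) the single-valuedness of $v_b$ and its vanishing on $\partial\Omega$ kills the $\langle\rho\nabla u\cdot\mathbf n, v_b\rangle$ contributions across shared faces, (ii) the replacement of $Q_r\nabla u$ by $\nabla u$ inside volume pairings is legitimate only because the test object lies in $[P_r(T)]^d$, and (iii) the $\partial T^+$ convection jump term genuinely contributes zero on the continuous side because $u$ has no trace jump. A secondary subtlety is that $\nabla_w$ and $\nabla_w\cdot(\mathbf b\,\cdot)$ are only defined on $V_h$, so I should interpret $\nabla_w u$ as $\nabla_w(Q_h u)$ or invoke the extension convention of \cite{autowang}; Lemma~\ref{Lemma5.1} is stated for $u\in H^1(\Omega)$ precisely to license this, so I would cite it directly rather than re-deriving. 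Once these points are handled, the identity follows by straightforward cancellation.
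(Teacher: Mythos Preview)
Your proposal is correct and follows essentially the same route as the paper's proof. The paper starts from $a(u,v)$ and, via Lemma~\ref{Lemma5.1} and \eqref{2.4new} with $\bvarphi=Q_r\nabla u$, reduces it to $(f,v_0)+\ell(u,v)$, whereas you start from the PDE tested against $v_0$ and work in the opposite direction toward $a(u,v)-\ell(u,v)$; the ingredients (commutativity identities \eqref{pro}--\eqref{pro2}, the identity \eqref{2.4new}, single-valuedness of $v_b$, and the vanishing of the $\partial T^+$ term for the continuous $u$) are identical, and the bookkeeping you flag is exactly what the paper carries out.
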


\begin{proof} 
By applying the properties \eqref{pro} and \eqref{pro2}, and setting $\bm{\varphi} = Q_r \nabla u$ in the definition of the discrete weak gradient \eqref{2.4new}, we obtain:
\begin{equation*}\label{54} 
\begin{split} 
&\sum_{T\in \mathcal{T}_h}    (\rho \nabla_w u, \nabla_w v)_T + (\nabla_w \cdot (\mathbf{b} u), v_0)_T \\&+\langle \bb\cdot\bn(u|_T-u|_{\partial T}), v_0-v_b\rangle_{\partial T^+}+ (cu, v_0)_T    \\
=& \sum_{T\in \mathcal{T}_h}   (\rho Q_r \nabla u, \nabla_w v)_T + (Q_r \nabla \cdot (\mathbf{b} u), v_0)_T \\& + (cu, v_0)_T    \\
=& \sum_{T\in \mathcal{T}_h}    (\rho Q_r \nabla u, \nabla v_0)_T + \langle \rho Q_r \nabla u \cdot \mathbf{n}, v_b - v_0 \rangle_{\partial T}
\\ & + (Q_r \nabla \cdot (\mathbf{b} u), v_0)_T  + (cu, v_0)_T    \\
=& \sum_{T\in \mathcal{T}_h}    (\rho \nabla u, \nabla v_0)_T + \langle \rho Q_r \nabla u \cdot \mathbf{n}, v_b - v_0 \rangle_{\partial T}
 \\ & + (\nabla \cdot (\mathbf{b} u), v_0)_T  + (cu, v_0)_T    \\
=& \sum_{T\in \mathcal{T}_h}    (f, v_0)_T + \langle \rho \nabla u \cdot \mathbf{n}, v_0 \rangle_{\partial T} + \langle \rho Q_r \nabla u \cdot \mathbf{n}, v_b - v_0 \rangle_{\partial T} \\
=& \sum_{T\in \mathcal{T}_h} (f, v_0)_T + \sum_{T\in \mathcal{T}_h} \langle \rho(I - Q_r) \nabla u \cdot \mathbf{n}, v_0 - v_b \rangle_{\partial T}, 
\end{split} 
\end{equation*} 
where we have employed the model equation \eqref{model}, standard integration by parts, and the fact that $\sum_{T\in \mathcal{T}_h} \langle \rho \nabla u \cdot \mathbf{n}, v_b \rangle_{\partial T} = \langle \rho \nabla u \cdot \mathbf{n}, v_b \rangle_{\partial \Omega} = 0$ since $v_b = 0$ on $\partial \Omega$. 

Subtracting the numerical scheme \eqref{WG} from the above identity  yields:
\a{  &\quad \ \sum_{T\in \mathcal{T}_h}    (\rho \nabla_w e_h, \nabla_w v)_T + (\nabla_w \cdot (\mathbf{b} e_h), v_0)_T \\&+\langle \bb\cdot\bn(e_0-e_b), v_0-v_b\rangle_{\partial T^+}+ (c e_h, v_0)_T 
\\ &
   = \sum_{T\in \mathcal{T}_h} \langle \rho(I - Q_r) \nabla u \cdot \mathbf{n}, v_0 - v_b \rangle_{\partial T}. 
} 
This completes the proof of the lemma. 
\end{proof}

\section{Error Estimates}

The following lemma recalls established approximation properties of the projection operators $Q_r$ and $Q_0$ under the assumed shape-regularity of the partition.

\begin{lemma}\cite{wy3655}
Let ${\cal T}_h$ be a finite element partition of the domain $\Omega$ satisfying the shape-regularity assumptions specified in \cite{wy3655}. For any $0\leq s \leq 1$, $0\leq n \leq k$, and $0\leq m \leq r$, the following estimates hold:
\begin{eqnarray}\label{error1}
 \sum_{T\in {\cal T}_h}h_T^{2s}\|\nabla u- Q_r \nabla u\|^2_{s,T}&\leq& C  h^{2m}\|u\|^2_{m+1},\\
\label{error2}
\sum_{T\in {\cal T}_h}h_T^{2s}\|u- Q _0u\|^2_{s,T}&\leq& C h^{2n+2}\|u\|^2_{n+1}.
\end{eqnarray}
 \end{lemma}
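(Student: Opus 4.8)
The plan is to reduce both estimates to local, element-by-element polynomial approximation bounds and then square and sum, exploiting that $Q_0$ and $Q_r$ are $L^2$-orthogonal projections (hence $L^2$-stable and polynomial-reproducing) together with the shape-regularity of ${\cal T}_h$. First I would observe that it suffices to establish the local bounds
\begin{equation*}
h_T^{s}\|u-Q_0 u\|_{s,T}\le C h_T^{n+1}|u|_{n+1,T},\qquad
h_T^{s}\|\nabla u-Q_r\nabla u\|_{s,T}\le C h_T^{m}|u|_{m+1,T},
\end{equation*}
for $0\le s\le 1$. Squaring these, using $h_T\le h$ to extract the global mesh-size factor, and summing over $T\in{\cal T}_h$ then yields \eqref{error1}--\eqref{error2}, since $\sum_{T}|u|^2_{j,T}\le\|u\|^2_{j}$.

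For the local estimates I would invoke the Dupont--Scott theory of averaged Taylor polynomials, a quantitative Bramble--Hilbert lemma. Under shape-regularity each $T$ is star-shaped with respect to a ball of radius comparable to $h_T$, so there is a polynomial $\Pi u\in P_n(T)$ (respectively an approximation $\Pi(\nabla u)\in [P_m(T)]^d$ of the vector field $\nabla u$) satisfying
\begin{equation*}
|u-\Pi u|_{s,T}\le C h_T^{\,n+1-s}|u|_{n+1,T},\qquad 0\le s\le n+1,
\end{equation*}
together with the analogous bound $|\nabla u-\Pi(\nabla u)|_{s,T}\le C h_T^{\,m-s}|u|_{m+1,T}$. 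The role of this step is purely to furnish a good \emph{polynomial} comparison object carrying the correct $h_T$-scaling directly on the possibly non-affine polytope $T$.

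Next I would transfer these approximation properties to the $L^2$ projections. Since $Q_0\Pi u=\Pi u$, the key identity $u-Q_0 u=(I-Q_0)(u-\Pi u)$ holds. For $s=0$, the $L^2$-stability $\|Q_0\phi\|_{0,T}\le\|\phi\|_{0,T}$ immediately gives $\|u-Q_0 u\|_{0,T}\le\|u-\Pi u\|_{0,T}\le C h_T^{n+1}|u|_{n+1,T}$. For $s=1$ I would split $|u-Q_0 u|_{1,T}\le|u-\Pi u|_{1,T}+|Q_0(u-\Pi u)|_{1,T}$ and control the second term by the polynomial inverse inequality $|p|_{1,T}\le C h_T^{-1}\|p\|_{0,T}$ followed once more by $L^2$-stability of $Q_0$, producing $|u-Q_0 u|_{1,T}\le C h_T^{n}|u|_{n+1,T}$, exactly the $s=1$ form of the target bound. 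The identical argument with $Q_r$, $\Pi(\nabla u)$, and degree $m$ delivers the gradient estimate.

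The main obstacle is the interaction between the $L^2$ projection and the higher-order seminorms on a \emph{polytopal}, non-affine element: unlike a nodal interpolant, $Q_r$ is not locally computable from boundary data, so all the $h_T$-powers must be extracted from the reproduction-plus-stability identity combined with the inverse inequality, while the Dupont--Scott estimate is applied directly on $T$ rather than pulled back to a single reference cell. The inverse inequality is precisely the device that bridges the seminorm order and the available smoothness; once the correct local scaling is secured, squaring and summing is routine. These are the approximation results established in \cite{wy3655}, which we quote here.
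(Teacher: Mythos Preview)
The paper does not give a proof of this lemma: it is stated as a quotation from \cite{wy3655} and used as a black box in the subsequent error analysis. So there is no ``paper's own proof'' to compare against; your last sentence (``These are the approximation results established in \cite{wy3655}, which we quote here'') already matches exactly what the paper does.

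That said, your sketch is the standard and correct route to these estimates: polynomial reproduction of the $L^2$ projection gives $u-Q_0u=(I-Q_0)(u-\Pi u)$, the Bramble--Hilbert/Dupont--Scott bound on the star-shaped polytope furnishes the right $h_T$-powers, $L^2$-stability handles $s=0$, and the inverse inequality on $Q_0(u-\Pi u)$ handles $s=1$; the same mechanism applies componentwise to $\nabla u$ and $Q_r$. One small point: as written you only treat the endpoints $s=0$ and $s=1$, while the lemma is stated for all $0\le s\le 1$. In the paper only $s=0,1$ are ever used (see the applications of \eqref{error1}--\eqref{error2} in the proofs of \eqref{erroresti1} and \eqref{trinorm}), so this is harmless here; if you wanted the full range you would interpolate between the two endpoint bounds.
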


\begin{lemma} \ 
Assume the exact solution $u$ of the convection-diffusion-reaction equation \eqref{model} possesses sufficient regularity such that $u\in H^{k+1} (\Omega)$. There exists a constant $C$, independent of the mesh size $h$, such that the following approximation estimate holds:
\begin{equation}\label{erroresti1}
\3bar u-Q_hu \3bar \leq C(1+\rho^{\frac{1}{2}})h^k\|u\|_{k+1}.
\end{equation}
\end{lemma}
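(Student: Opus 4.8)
The plan is to estimate each of the four pieces that make up $\3bar u - Q_h u\3bar^2$ from the definition \eqref{3norm}, namely the weak-gradient term $\rho\|\nabla_w(u-Q_hu)\|_T^2$, the two boundary flux terms on $\partial T^\pm$, and the reaction term $((c+\tfrac12\nabla\cdot\mathbf b)(u_0-Q_0u),\,u_0-Q_0u)_T$; by the triangle inequality it suffices to bound $\3bar u - Q_hu\3bar$, and by Lemma \ref{normeqva} it is equivalent (up to constants independent of $h$) to bound $\|u-Q_hu\|_{1,h}$, which is often the more convenient quantity since it involves $\nabla v_0$ directly rather than $\nabla_w v$.

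First I would handle the weak-gradient term. Writing $v = u - Q_hu = \{u - Q_0u,\ u - Q_bu\}$ and using the commutativity property \eqref{pro}, one has $\nabla_w(u-Q_hu) = \nabla_w u - \nabla_w(Q_hu) = Q_r\nabla u - \nabla_w(Q_hu)$; alternatively, and more directly, apply \eqref{2.4new} to $v$ with an arbitrary test vector $\bvarphi\in[P_r(T)]^d$ to get $(\nabla_w v,\bvarphi)_T = (\nabla(u-Q_0u),\bvarphi)_T + \langle (u-Q_bu)-(u-Q_0u),\bvarphi\cdot\mathbf n\rangle_{\partial T} = (\nabla(u-Q_0u),\bvarphi)_T + \langle Q_0u - Q_bu,\bvarphi\cdot\mathbf n\rangle_{\partial T}$. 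Taking $\bvarphi = \nabla_w v$, applying Cauchy–Schwarz and the discrete trace inequality \eqref{trace} to $\bvarphi$, one obtains $\|\nabla_w v\|_T \le C(\|\nabla(u-Q_0u)\|_T + h_T^{-1/2}\|Q_0u - Q_bu\|_{\partial T})$. The first summand is controlled by \eqref{error2} with $s=1$, $n=k$, giving $h^k\|u\|_{k+1}$ after summation. For the second, insert $u$: $\|Q_0u - Q_bu\|_{\partial T} \le \|Q_0u - u\|_{\partial T} + \|u - Q_bu\|_{\partial T}$; since $Q_b$ is the $L^2$ projection on the boundary, $\|u-Q_bu\|_{\partial T}\le\|u-Q_0u\|_{\partial T}$ (when $q\ge$ the relevant degree; in any case bounded by it), and $\|u-Q_0u\|_{\partial T}$ is estimated via the trace inequality \eqref{tracein} applied to $u-Q_0u$ together with \eqref{error2}: $\|u-Q_0u\|^2_{\partial T}\le C(h_T^{-1}\|u-Q_0u\|_T^2 + h_T\|\nabla(u-Q_0u)\|_T^2)\le Ch_T^{2k+1}\|u\|_{k+1,T}^2$, so $h_T^{-1/2}\|Q_0u-Q_bu\|_{\partial T}\le Ch_T^{k}\|u\|_{k+1,T}$. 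Summing over $T$ and multiplying by $\rho$ yields the $\rho h^{2k}\|u\|_{k+1}^2$ contribution, matching the $\rho^{1/2}h^k$ term in \eqref{erroresti1} after taking square roots.

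Next I would dispatch the two boundary flux terms and the reaction term, which contribute the $h^k$ (no $\rho$) part of the bound. The flux terms are bounded in absolute value by $C\|\mathbf b\|_\infty\langle|u_0-u_b|,|u_0-u_b|\rangle_{\partial T} = C\|\mathbf b\|_\infty\|(u-Q_0u)-(u-Q_bu)\|_{\partial T}^2 = C\|\mathbf b\|_\infty\|Q_0u-Q_bu\|_{\partial T}^2$, and by the estimate from the previous paragraph this is $\le Ch_T^{2k+1}\|u\|_{k+1,T}^2$, which sums to $Ch^{2k+1}\|u\|_{k+1}^2$ — actually better than needed. The reaction term is immediate: $|((c+\tfrac12\nabla\cdot\mathbf b)(u-Q_0u),u-Q_0u)_T|\le C\|u-Q_0u\|_T^2$ with $C$ depending on $\|c\|_\infty$ and $\|\nabla\cdot\mathbf b\|_\infty$, and \eqref{error2} with $s=0$, $n=k$ gives $Ch^{2k+2}\|u\|_{k+1}^2$ after summation. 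Finally, if one works with $\|\cdot\|_{1,h}$ there is the extra term $h_T^{-1}\|u_0-u_b\|_{\partial T}^2 = h_T^{-1}\|Q_0u-Q_bu\|_{\partial T}^2\le Ch_T^{2k}\|u\|_{k+1,T}^2$, again summing correctly. Collecting the pieces, $\3bar u-Q_hu\3bar^2\le C(1+\rho)h^{2k}\|u\|_{k+1}^2$, and $(1+\rho)^{1/2}\le 1+\rho^{1/2}$ gives \eqref{erroresti1}.

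The main obstacle — really the only non-routine point — is the proper treatment of the jump term $\|Q_0u - Q_bu\|_{\partial T}$ appearing in the weak-gradient estimate, since this is where the boundary-layer-sensitive $\rho$-dependence enters and where one must be careful to use the $H^1$-style trace inequality \eqref{tracein} on $u-Q_0u$ (not the polynomial inequality \eqref{trace}) so as to pick up the full $h_T^{k+1/2}$ scaling rather than losing half a power of $h$. Everything else is a direct application of the approximation estimates \eqref{error1}–\eqref{error2}, boundedness of the coefficients $\mathbf b, c$, and the norm equivalence of Lemma \ref{normeqva}.
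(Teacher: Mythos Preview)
Your proposal is correct and follows essentially the same route as the paper: estimate $\rho\|\nabla_w(u-Q_hu)\|_T$ by testing \eqref{2.4new} with $\bvarphi=\nabla_w(u-Q_hu)$ and using Cauchy--Schwarz, the trace inequalities \eqref{tracein}--\eqref{trace}, and the approximation bound \eqref{error2}; then handle the flux and reaction terms directly by the same tools. The paper additionally bounds $\|\nabla_w\cdot(\bb(u-Q_hu))\|$ (its estimate \eqref{eee2}) and closes by writing $\3bar u-Q_hu\3bar^2=a(u-Q_hu,u-Q_hu)$ via Lemma~\ref{ineq}, whereas you work straight from the definition \eqref{3norm} of $\3bar\cdot\3bar$ and skip the weak-divergence estimate; your path is slightly leaner but otherwise the same argument. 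One small caveat: your appeal to Lemma~\ref{normeqva} is stated for $v\in V_h$, and $u-Q_hu\notin V_h$; however you only need the direction $\3bar v\3bar\le C_2\|v\|_{1,h}$, whose proof (via \eqref{2.4new} and the polynomial trace inequality applied to the test vector $\bvarphi$) does not use $v\in V_h$, so the step is valid---just worth stating explicitly.
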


\begin{proof}
By employing the identity \eqref{2.4new}, the Cauchy-Schwarz inequality, the trace inequalities \eqref{tracein}--\eqref{trace}, and the approximation estimate \eqref{error2} for $n=k$ with $s=0, 1$, we obtain
\begin{equation*}
\begin{split}
&\quad\sum_{T\in {\cal T}_h} (\rho\nabla_w(u-Q_hu), \bv)_T \\
=&\sum_{T\in {\cal T}_h} (\rho\nabla(u-Q_0u),  \bv)_T+ \langle\rho( Q_0u-Q_bu), \bv\cdot\bn\rangle_{\partial T}\\
&\leq \Big(\sum_{T\in {\cal T}_h}\|\rho^{\frac{1}{2}}\nabla(u-Q_0u)\|^2_T\Big)^{\frac{1}{2}} \Big(\sum_{T\in {\cal T}_h}\|\rho^{\frac{1}{2}}\bv\|_T^2\Big)^{\frac{1}{2}}\\&\quad+ \Big(\sum_{T\in {\cal T}_h} \|\rho^{\frac{1}{2}}(Q_0u-Q_bu)\|_{\partial T} ^2\Big)^{\frac{1}{2}}\Big(\sum_{T\in {\cal T}_h} \|\rho^{\frac{1}{2}}\bv\|_{\partial T}^2\Big)^{\frac{1}{2}}\\
&\leq\Big(\ \sum_{T\in {\cal T}_h} \|\rho^{\frac{1}{2}}\nabla(u-Q_0u)\|_T^2\Big)^{\frac{1}{2}}\Big(\sum_{T\in {\cal T}_h} \|\rho^{\frac{1}{2}}\bv\|_T^2\Big)^{\frac{1}{2}}\\&\quad+\Big(\sum_{T\in {\cal T}_h}h_T^{-1} \|\rho^{\frac{1}{2}}(Q_0u-u)\|_{T} ^2+h_T \|\rho^{\frac{1}{2}}(Q_0u-u)\|_{1,T} ^2\Big)^{\frac{1}{2}}  \\
  & \quad \ \cdot \Big(\sum_{T\in {\cal T}_h}Ch_T^{-1}\|\rho^{\frac{1}{2}}\bv\|_T^2\Big)^{\frac{1}{2}}\\
&\leq C\rho^{\frac{1}{2}} h^k\|u\|_{k+1}\Big(\sum_{T\in {\cal T}_h} \|\rho^{\frac{1}{2}}\bv\|_T^2\Big)^{\frac{1}{2}},
\end{split}
\end{equation*}
for any $\bv\in [P_r(T)]^d$. Setting $\bv= \nabla_w(u-Q_hu)$ yields
\begin{equation}\label{eee1}
 \sum_{T\in {\cal T}_h} \rho\|\nabla_w(u-Q_hu)\|_T^2\leq C\rho h^{2k}\|u\|_{k+1}^2.   
\end{equation}

Similarly, utilizing \eqref{divnew}, the Cauchy-Schwarz inequality, the trace inequalities \eqref{tracein}--\eqref{trace}, and the estimate \eqref{error2} for $n=k$ and $s=0, 1$, we have
\begin{equation*}
\begin{split}
&\quad\sum_{T\in {\cal T}_h} (\nabla_w\cdot (\bb (u-Q_hu)), w)_T \\
=&\sum_{T\in {\cal T}_h} (\nabla\cdot (\bb (u-Q_0u)),  w)_T+ \langle \bb \cdot \bn (Q_0u-Q_bu), w\rangle_{\partial T}\\
&\leq \Big(\sum_{T\in {\cal T}_h}\|\nabla\cdot (\bb (u-Q_0u))\|^2_T\Big)^{\frac{1}{2}} \Big(\sum_{T\in {\cal T}_h}\|w\|_T^2\Big)^{\frac{1}{2}}\\&\quad+ \Big(\sum_{T\in {\cal T}_h} \| \bb \cdot \bn (Q_0u-Q_bu)\|_{\partial T} ^2\Big)^{\frac{1}{2}}\Big(\sum_{T\in {\cal T}_h} \|w\|_{\partial T}^2\Big)^{\frac{1}{2}}\\
&\leq \Big(\sum_{T\in {\cal T}_h}\|\nabla\cdot (\bb (u-Q_0u))\|^2_T\Big)^{\frac{1}{2}} \Big(\sum_{T\in {\cal T}_h}\|w\|_T^2\Big)^{\frac{1}{2}}\\&\quad+ \Big(\sum_{T\in {\cal T}_h} h_T^{-1}\| \bb \cdot \bn (Q_0u- u)\|_{ T} ^2+h_T \| \bb \cdot \bn (Q_0u- u)\|_{1, T} ^2\Big)^{\frac{1}{2}}
\\ & \quad \cdot \Big(\sum_{T\in {\cal T}_h} h_T^{-1}\|w\|_{T}^2\Big)^{\frac{1}{2}}\\
&\leq Ch^k\|u\|_{k+1}\Big(\sum_{T\in {\cal T}_h} \|w\|_T^2\Big)^{\frac{1}{2}},
\end{split}
\end{equation*}
for any $w\in P_r(T)$. Choosing $w=\nabla_w\cdot (\bb (u-Q_hu))$ gives
\begin{equation}\label{eee2}
    \sum_{T\in {\cal T}_h}  \|\nabla_w\cdot \bb (u-Q_hu)\|^2_T\leq 
Ch^{2k}\|u\|_{k+1}^2.
\end{equation} 

Applying  the estimate \eqref{error2} with $n=k$ and $s=0$, we have
\begin{equation}\label{eee3}
\sum_{T\in {\cal T}_h} c\|u-Q_0u\|_T^2\leq Ch^{2(k+1)}\|u\|^2_{k+1}.
\end{equation}

Using the trace inequality \eqref{trace}, the estimate \eqref{error2} with $n=k$ and $s=0$ and $s=1$,
we have
\begin{equation}
    \begin{split}
        &\sum_{T\in {\cal T}_h} \langle \bb\cdot \bn (Q_bu-Q_0u),Q_bu-Q_0u \rangle_{\partial T} \\ \leq & C\|Q_bu-Q_0u\|^2_{\partial T}\\
        \leq& Ch_T^{-1}\|u-Q_0u\|^2_{T}+Ch_T \|u-Q_0u\|^2_{1, T}\\
        \leq &Ch^{2k+1} \|u\|^2_{k+1}.
    \end{split}
\end{equation}

Finally, invoking Lemma \ref{ineq} and combining the estimates \eqref{eee1}, \eqref{eee2}, and \eqref{eee3}, we arrive at
 $$\3bar u-Q_hu \3bar^2 =a(u-Q_hu, u-Q_hu)
 \leq C(\rho+1) h^{2k} \|u\|^2_{k+1}.$$
 This concludes the proof of the lemma.
\end{proof}

\begin{theorem}
Let $u \in H^{k+1}(\Omega)$ be the exact solution of the convection-diffusion-reaction equation \eqref{model}. There exists a constant $C$ such that the following error estimate holds in the discrete energy norm:
\begin{equation}\label{trinorm}
\3bar u-u_h\3bar \leq C(1+\rho^{\frac{1}{2}} )h^k\|u\|_{k+1}.
\end{equation}
\end{theorem}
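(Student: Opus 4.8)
The plan is to derive the estimate \eqref{trinorm} via the triangle inequality
$$\3bar u - u_h\3bar \leq \3bar u - Q_h u\3bar + \3bar Q_h u - u_h\3bar,$$
where the first term is already controlled by the previous lemma \eqref{erroresti1}, giving the bound $C(1+\rho^{1/2})h^k\|u\|_{k+1}$. So the crux is to estimate the second, purely discrete term $\3bar \epsilon_h\3bar$ with $\epsilon_h := Q_h u - u_h \in V_h^0$. First I would use Lemma \ref{ineq} (the coercivity identity) to write $\3bar \epsilon_h\3bar^2 = a(\epsilon_h, \epsilon_h)$, and then rewrite $a(\epsilon_h, \epsilon_h) = a(Q_h u - u, \epsilon_h) + a(u - u_h, \epsilon_h)$. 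The second piece is $a(e_h, \epsilon_h)$, which the error equation Lemma \ref{errorequa} identifies with $\ell(u, \epsilon_h)$ — except one must be careful that the bilinear form $a(\cdot,\cdot)$ in \eqref{form} and the left-hand side of the error equation \eqref{erroreqn} agree on $V_h^0$; they do, since the extra boundary term in $a$ is exactly the $\partial T^+$ term appearing in \eqref{erroreqn}.

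Next I would bound $|\ell(u, \epsilon_h)|$. Using Cauchy–Schwarz on each $\partial T$,
$$|\ell(u, \epsilon_h)| \leq \sum_{T\in\mathcal{T}_h} \|\rho^{1/2}(I - Q_r)\nabla u\|_{\partial T}\,\|\rho^{1/2}(\epsilon_0 - \epsilon_b)\|_{\partial T},$$
then apply the trace inequality \eqref{tracein} to the first factor together with the approximation estimate \eqref{error1} (with $m = k$, $s = 0, 1$) to get $\big(\sum_T \|\rho^{1/2}(I-Q_r)\nabla u\|_{\partial T}^2\big)^{1/2} \leq C\rho^{1/2} h^{k - 1/2}\|u\|_{k+1}$, and bound the second factor by $\big(\sum_T h_T^{-1}\|\epsilon_0 - \epsilon_b\|_{\partial T}^2 \cdot h_T\big)^{1/2}$, absorbing the $h_T^{1/2}$ against the $h_T^{-1/2}$ scale to match the $\|\cdot\|_{1,h}$ norm. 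This yields $|\ell(u,\epsilon_h)| \leq C\rho^{1/2} h^k \|u\|_{k+1}\,\|\epsilon_h\|_{1,h} \leq C\rho^{1/2} h^k\|u\|_{k+1}\,\3bar \epsilon_h\3bar$ by the norm equivalence Lemma \ref{normeqva}. For the term $a(Q_h u - u, \epsilon_h)$, I would expand it term by term using the bound \eqref{erroresti1}-type estimates on $Q_h u - u$: the diffusion term is bounded via \eqref{eee1}, the weak-divergence term via \eqref{eee2}, the convective jump term via the trace bound already displayed in the proof of the previous lemma, and the reaction term via \eqref{eee3}, each paired with the corresponding component of $\3bar \epsilon_h\3bar$ through Cauchy–Schwarz, giving a total bound of $C(1+\rho^{1/2})h^k\|u\|_{k+1}\,\3bar \epsilon_h\3bar$.

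Combining, $\3bar \epsilon_h\3bar^2 \leq C(1+\rho^{1/2})h^k\|u\|_{k+1}\,\3bar\epsilon_h\3bar$, so $\3bar\epsilon_h\3bar \leq C(1+\rho^{1/2})h^k\|u\|_{k+1}$, and the triangle inequality finishes the proof. The main obstacle I anticipate is the bookkeeping in handling $a(Q_h u - u, \epsilon_h)$ cleanly: the bilinear form is not symmetric (the convection term $(\nabla_w\cdot(\mathbf{b}e_h), v_0)$ and the one-sided $\partial T^+$ jump term break symmetry), so I must be sure to pair each summand against the right piece of the energy norm of $\epsilon_h$, and in particular verify that the convective contribution $(\nabla_w\cdot(\mathbf{b}(Q_h u - u)), \epsilon_0)$ can be controlled by $\|\nabla_w\cdot(\mathbf{b}(Q_hu-u))\|\cdot\|\epsilon_0\|$ with $\|\epsilon_0\|$ in turn controlled by $\|\epsilon_h\|_{1,h}$ using the coercivity $c + \frac12\nabla\cdot\mathbf{b}\geq c_0 > 0$. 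A secondary subtlety is confirming that $\ell(u,\epsilon_h)$ as written matches $a(e_h,\epsilon_h)$ exactly, i.e., that there is no leftover stabilization term — this is guaranteed because the simple WG scheme here uses no separate stabilizer, only the bubble-enriched weak operators, which is precisely why the only consistency error is the single boundary functional $\ell$.
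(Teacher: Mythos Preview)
Your proposal is correct and uses the same core ingredients as the paper --- the error equation (Lemma \ref{errorequa}), the approximation bound \eqref{erroresti1}, the coercivity identity (Lemma \ref{ineq}), and the norm equivalence (Lemma \ref{normeqva}) --- but assembles them slightly differently. The paper works directly with $\3bar e_h\3bar^2 = a(e_h,e_h)$ and splits the \emph{second} argument as $e_h = (u-Q_hu)+(Q_hu-u_h)$, bounding $a(e_h,u-Q_hu)$ by a Cauchy--Schwarz type step $\3bar e_h\3bar\,\3bar u-Q_hu\3bar$ and handling $a(e_h,Q_hu-u_h)=\ell(u,Q_hu-u_h)$ via the error equation. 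You instead apply the triangle inequality at the outset and estimate the purely discrete error $\epsilon_h = Q_hu-u_h\in V_h^0$ through $\3bar\epsilon_h\3bar^2=a(\epsilon_h,\epsilon_h)$, splitting the \emph{first} argument. Your term-by-term treatment of $a(Q_hu-u,\epsilon_h)$ is more explicit than the paper's Cauchy--Schwarz shortcut (which, since $a$ is non-symmetric, really rests on an unstated boundedness estimate), and your observation that $\|\epsilon_0\|\le c_0^{-1/2}\3bar\epsilon_h\3bar$ is exactly what makes the convection piece go through. One minor point: with the weighting you describe for $\ell(u,\epsilon_h)$, the factor in front should come out as $C\rho$ rather than $C\rho^{1/2}$ (keep all of $\rho$ with the $(I-Q_r)\nabla u$ factor and pair $h_T^{1/2}$ against $h_T^{-1/2}$, as the paper does in \eqref{erroreqn1}); this does not affect the final $(1+\rho^{1/2})$ bound.
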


\begin{proof}
To estimate the right-hand side of the error equation \eqref{erroreqn}, we apply the Cauchy-Schwarz inequality, the trace inequality \eqref{tracein}, the approximation result \eqref{error1} for $m=k$ with $s=0,1$, and the norm equivalence \eqref{normeq} to obtain
\begin{equation}\label{erroreqn1}
\begin{split}
&\Big|\sum_{T\in {\cal T}_h}\langle \rho(I- Q_r)\nabla u\cdot\bn, v_0-v_b\rangle_{\partial T} \Big|\\
\leq & C (\sum_{T\in {\cal T}_h}\|\rho(I- Q_r)\nabla u\cdot\bn\|^2_T+h_T^2\|\rho (I- Q_r)\nabla u\cdot\bn\|^2_{1, T})^{\frac{1}{2}} 
\\&\cdot(\sum_{T\in {\cal T}_h}h_T^{-1}\|v_0-v_b\|^2_{\partial T})^{\frac{1}{2}}  \\
\leq & C\rho h^k\|u\|_{k+1} \| v\|_{1,h} \\ 
\leq & C\rho h^k\|u\|_{k+1} \3bar v\3bar.
\end{split}
\end{equation}
Substituting \eqref{erroreqn1} into \eqref{erroreqn} yields
\an{\label{err} \ad{ &\quad \ 
\sum_{T\in {\cal T}_h}(\rho\nabla_w e_h, \nabla_w v)_T+(\nabla_w \cdot (\bb e_h), v_0)_T\\ &+\langle \bb\cdot\bn(e_0-e_b), v_0-v_b\rangle_{\partial T^+}    +(ce_0, v_0)_T
  \leq   C\rho h^k\|u\|_{k+1} \3bar  v\3bar. } } 

Using \eqref{ineq}, the Cauchy-Schwarz inequality, the triangle inequality, and setting $v=Q_hu-u_h$ in \eqref{err}, while considering the estimate  \eqref{erroresti1}, we find
\begin{equation*}
\begin{split}
& \3bar u-u_h\3bar^2 
=  a(u-u_h, u-u_h)\\
=  &  a(u-u_h, u-Q_hu)+a(u-u_h, Q_hu-u_h)\\
\leq &a( u-u_h, u-u_h) a(u-Q_hu, u-Q_hu) \\
  & + C\rho h^k\|u\|_{k+1} \3bar Q_hu-u_h\3bar  \\
\leq &\3bar u-u_h  \3bar  \3bar u-Q_hu\3bar\\
  &  + C\rho h^k\|u\|_{k+1}(\3bar Q_hu-u \3bar +\3bar  u-u_h\3bar ) \\
\leq &\3bar u-u_h  \3bar  C(1+\rho^{\frac{1}{2}
}) h^k\|u\|_{k+1} + C\rho h^k\|u\|_{k+1}  (1+\rho^{\frac{1}{2}
}) h^k\|u\|_{k+1}\\
&+C\rho h^k\|u\|_{k+1} \3bar u-u_h\3bar.
\end{split}
\end{equation*}
Rearranging the terms leads to the desired result:
\begin{equation*}
\begin{split}
 \3bar u-u_h\3bar  \leq C (1+\rho^{\frac{1}{2}
}) h^k\|u\|_{k+1}. 
\end{split}
\end{equation*} 

This completes the proof of the theorem.
\end{proof}

\section{Numerical experiments}

We solve the convection-diffusion-reaction equation \eqref{model} 
   on a square domain $\Omega= (-1,1)\times (-1,1)$, where  
\an{\label{b} \rho=1 \ \t{or} \ 10^{-6} \ \t{or} \ 10^{-9}, \quad
     \b b =\p{1\\1}, \quad c =1.  }
By choosing $f$ in \eqref{model}, the exact solution is, independent of $\rho$
    (i.e., no boundary/interior layers), 
\an{\label{u-1} u=\sin (\pi x) \sin (\pi y). }  

\begin{figure}[H]
 \begin{center}\setlength\unitlength{1.0pt}
\begin{picture}(360,120)(0,0)
  \put(15,108){$G_1$:} \put(125,108){$G_2$:} \put(235,108){$G_3$:} 
  \put(0,-420){\includegraphics[width=380pt]{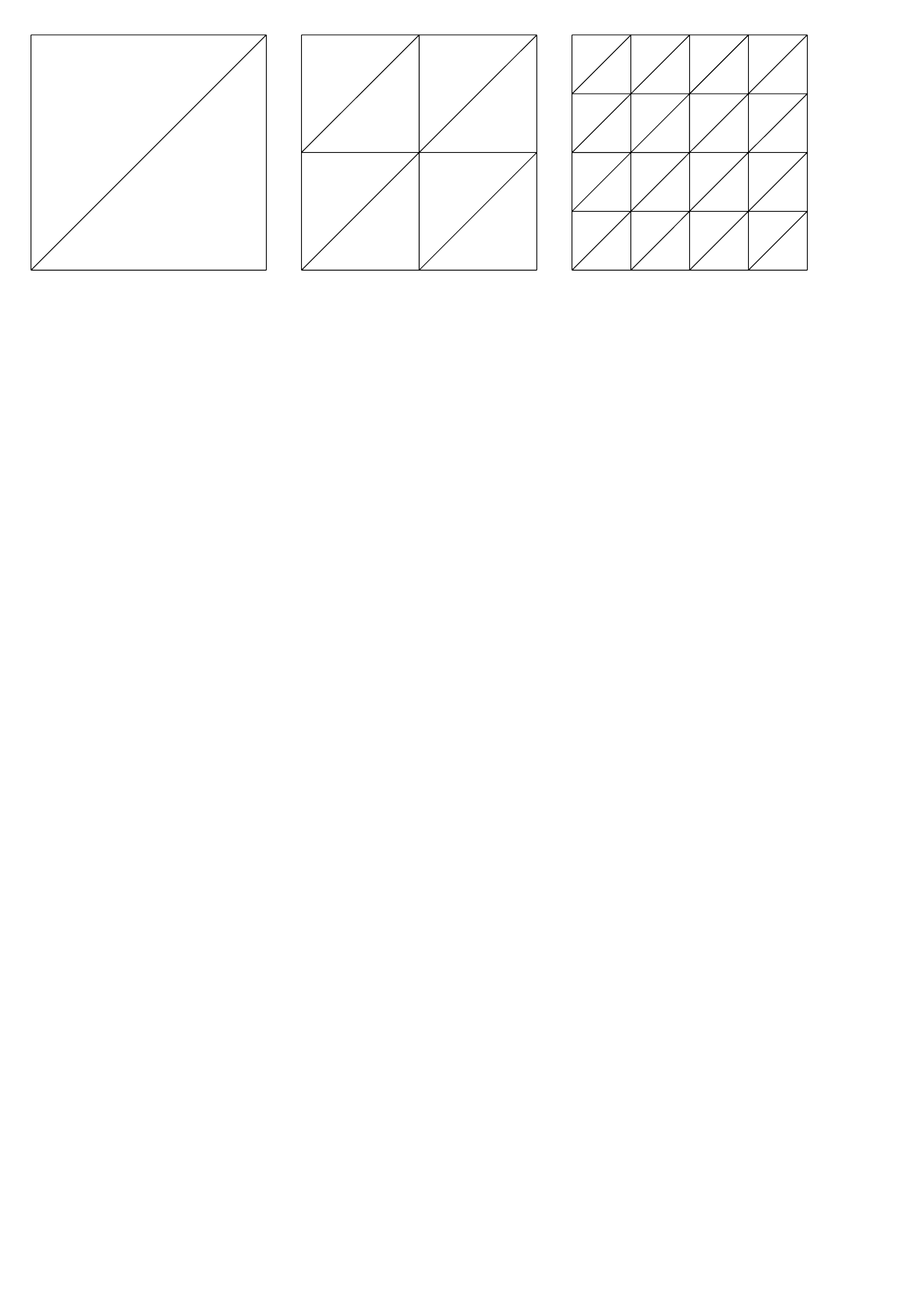}}  
 \end{picture}\end{center}
\caption{The triangular grids for the computation in Tables \ref{t1}--\ref{t4}. }\label{f21}
\end{figure}

The solution in \eqref{u-1} is approximated by the weak Galerkin finite element
   $P_k$-$P_k$/$P_{k+1}$ (for $\{u_0, u_b\}$/$\nabla_w$), $k= 1,2,3,4$, on triangular  
    grids shown in Figure \ref{f21}.
The errors and the computed orders of convergence are listed in Tables \ref{t1}--\ref{t4}.
The optimal order of convergence is achieved in every case.
   
  \begin{table}[H]
  \caption{By the $P_1$-$P_1$/$P_2$ element for \eqref{u-1}  on Figure \ref{f21} grids.} \label{t1}
\begin{center}  
   \begin{tabular}{c|rr|rr}  
 \hline 
$G_i$ &  $ \|Q_h  u -   u_h \| $ & $O(h^r)$ &  $ \sqrt{\rho}\| \nabla_w( Q_h u- u_h )\|  $ & $O(h^r)$\\
\hline 
   &\multicolumn{4}{c}{$\rho=1$ in \eqref{b} }\\  \hline 
 5&    0.164E-02 &  2.0&    0.108E+00 &  1.0\\
 6&    0.412E-03 &  2.0&    0.537E-01 &  1.0\\
 7&    0.103E-03 &  2.0&    0.269E-01 &  1.0\\
 \hline  &\multicolumn{4}{c}{$\lambda=10^{-6}$ in \eqref{b} }\\
 \hline 
 5&    0.142E-02 &  2.0&    0.204E-03 &  1.0\\
 6&    0.358E-03 &  2.0&    0.103E-03 &  1.0\\
 7&    0.896E-04 &  2.0&    0.515E-04 &  1.0\\ 
  \hline  
\end{tabular} \end{center}  \end{table}

  \begin{table}[H]
  \caption{By the $P_2$-$P_2$/$P_3$ element for \eqref{u-1}  on Figure \ref{f21} grids.} \label{t2}
\begin{center}  
   \begin{tabular}{c|rr|rr}  
 \hline 
$G_i$ &  $ \|Q_h  u -   u_h \| $ & $O(h^r)$ &  $ \sqrt{\rho}\| \nabla_w( Q_h u- u_h )\| $ & $O(h^r)$\\
\hline 
   &\multicolumn{4}{c}{$\rho=1$ in \eqref{b} }\\  \hline 
 5&    0.192E-04 &  3.0&    0.406E-02 &  2.0\\
 6&    0.239E-05 &  3.0&    0.101E-02 &  2.0\\
 7&    0.297E-06 &  3.0&    0.253E-03 &  2.0\\
 \hline  &\multicolumn{4}{c}{$\lambda=10^{-6}$ in \eqref{b} }\\
 \hline 
 5&    0.192E-04 &  3.0&    0.406E-02 &  2.0\\
 6&    0.239E-05 &  3.0&    0.101E-02 &  2.0\\
 7&    0.297E-06 &  3.0&    0.253E-03 &  2.0\\
  \hline  
\end{tabular} \end{center}  \end{table}

  \begin{table}[H]
  \caption{By the $P_3$-$P_3$/$P_4$ element for \eqref{u-1}  on Figure \ref{f21} grids.} \label{t3}
\begin{center}  
   \begin{tabular}{c|rr|rr}  
 \hline 
$G_i$ &  $ \|Q_h  u -   u_h \| $ & $O(h^r)$ &  $ \sqrt{\rho}\| \nabla_w( Q_h u- u_h )\|_0 $ & $O(h^r)$\\
\hline 
   &\multicolumn{4}{c}{$\rho=1$ in \eqref{b} }\\  \hline 
 4&    0.664E-05 &  4.1&    0.912E-03 &  3.0\\
 5&    0.401E-06 &  4.0&    0.114E-03 &  3.0\\
 6&    0.251E-07 &  4.0&    0.147E-04 &  3.0\\
 \hline  &\multicolumn{4}{c}{$\lambda=10^{-6}$ in \eqref{b} }\\
 \hline 
 5&    0.835E-06 &  4.0&    0.326E-06 &  3.0\\
 6&    0.523E-07 &  4.0&    0.409E-07 &  3.0\\
 7&    0.327E-08 &  4.0&    0.511E-08 &  3.0\\
  \hline  
\end{tabular} \end{center}  \end{table}

  \begin{table}[H]
  \caption{By the $P_4$-$P_4$/$P_5$ element for \eqref{u-1}  on Figure \ref{f21} grids.} \label{t4}
\begin{center}  
   \begin{tabular}{c|rr|rr}  
 \hline 
$G_i$ &  $ \|Q_h  u -   u_h \| $ & $O(h^r)$ &  $ \sqrt{\rho}\| \nabla_w( Q_h u- u_h )\|_0 $ & $O(h^r)$\\
\hline 
   &\multicolumn{4}{c}{$\rho=1$ in \eqref{b} }\\  \hline 
 4&    0.260E-06 &  5.0&    0.142E-03 &  4.0\\
 5&    0.816E-08 &  5.0&    0.893E-05 &  4.0\\
 6&    0.255E-09 &  5.0&    0.558E-06 &  4.0\\
 \hline  &\multicolumn{4}{c}{$\lambda=10^{-6}$ in \eqref{b} }\\
 \hline 
 4&    0.468E-06 &  5.0&    0.155E-06 &  4.0\\
 5&    0.147E-07 &  5.0&    0.973E-08 &  4.0\\
 6&    0.461E-09 &  5.0&    0.609E-09 &  4.0\\
  \hline  
\end{tabular} \end{center}  \end{table}

\begin{figure}[H]
 \begin{center}\setlength\unitlength{1.0pt}
\begin{picture}(360,120)(0,0)
  \put(15,108){$G_1$:} \put(125,108){$G_2$:} \put(235,108){$G_3$:} 
  \put(0,-420){\includegraphics[width=380pt]{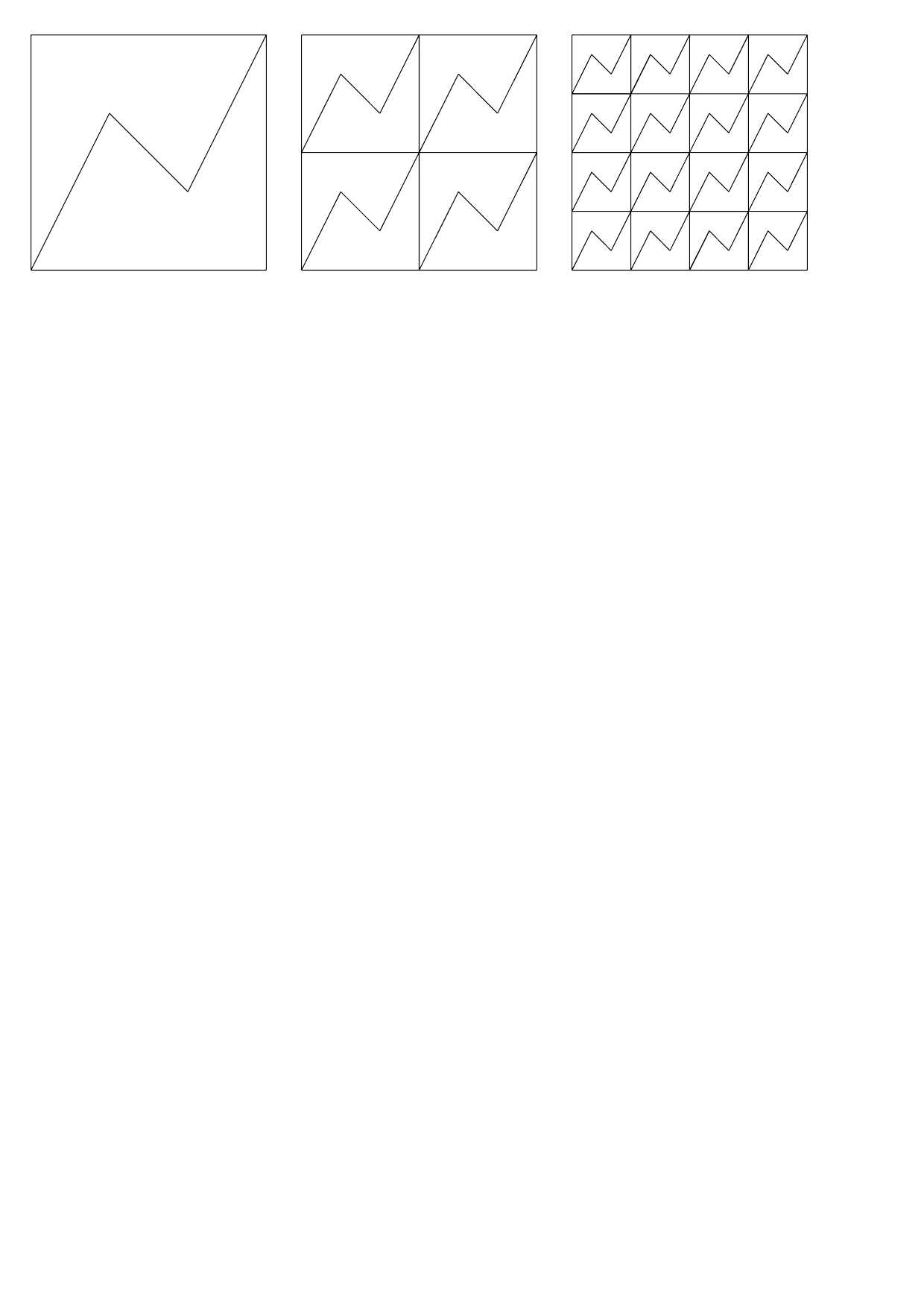}}  
 \end{picture}\end{center}
\caption{The nonconvex polyhedral grids for the computation in Tables \ref{t5}--\ref{t8},
  and in Tables \ref{t13}--\ref{t16}. }\label{f22}
\end{figure}

The solution in \eqref{u-1} is computed again by the weak Galerkin finite element
   $P_k$-$P_k$/$P_{k+2}$ (for $\{u_0, u_b\}$/$\nabla_w$), $k= 1,2,3,4$, on nonconvex polyhedral  
    grids shown in Figure \ref{f22}.
The errors and the computed orders of convergence are listed in Tables \ref{t5}--\ref{t8}.
The optimal order of convergence is also achieved in every case, but the
  results are slightly worse than those on triangular grids.
   
  \begin{table}[H]
  \caption{By the $P_1$-$P_1$/$P_3$ element for \eqref{u-1}  on Figure \ref{f22} grids.} \label{t5}
\begin{center}  
   \begin{tabular}{c|rr|rr}  
 \hline 
$G_i$ &  $ \|Q_h  u -   u_h \| $ & $O(h^r)$ &  $ \sqrt{\rho}\| \nabla_w( Q_h u- u_h )\|_0 $ & $O(h^r)$\\
\hline 
   &\multicolumn{4}{c}{$\rho=1$ in \eqref{b} }\\  \hline 
 5&    0.449E-02 &  2.0&    0.286E+00 &  1.0\\
 6&    0.113E-02 &  2.0&    0.143E+00 &  1.0\\
 7&    0.283E-03 &  2.0&    0.712E-01 &  1.0\\
 \hline  &\multicolumn{4}{c}{$\lambda=10^{-6}$ in \eqref{b} }\\
 \hline 
 5&    0.178E-02 &  2.0&    0.395E-03 &  1.0\\
 6&    0.448E-03 &  2.0&    0.199E-03 &  1.0\\
 7&    0.112E-03 &  2.0&    0.100E-03 &  1.0\\
  \hline  
\end{tabular} \end{center}  \end{table}

  \begin{table}[H]
  \caption{By the $P_2$-$P_2$/$P_4$ element for \eqref{u-1}  on Figure \ref{f22} grids.} \label{t6}
\begin{center}  
   \begin{tabular}{c|rr|rr}  
 \hline 
$G_i$ &  $ \|Q_h  u -   u_h \| $ & $O(h^r)$ &  $ \sqrt{\rho}\| \nabla_w( Q_h u- u_h )\|_0 $ & $O(h^r)$\\
\hline 
   &\multicolumn{4}{c}{$\rho=1$ in \eqref{b} }\\  \hline 
 5&    0.226E-04 &  3.0&    0.742E-02 &  1.9\\
 6&    0.284E-05 &  3.0&    0.188E-02 &  2.0\\
 7&    0.356E-06 &  3.0&    0.470E-03 &  2.0\\
 \hline  &\multicolumn{4}{c}{$\lambda=10^{-6}$ in \eqref{b} }\\
 \hline 
 5&    0.433E-04 &  3.0&    0.133E-04 &  2.0\\
 6&    0.550E-05 &  3.0&    0.334E-05 &  2.0\\
 7&    0.691E-06 &  3.0&    0.838E-06 &  2.0\\
  \hline  
\end{tabular} \end{center}  \end{table}

  \begin{table}[H]
  \caption{By the $P_3$-$P_3$/$P_5$ element for \eqref{u-1}  on Figure \ref{f22} grids.} \label{t7}
\begin{center}  
   \begin{tabular}{c|rr|rr}  
 \hline 
$G_i$ &  $ \|Q_h  u -   u_h \| $ & $O(h^r)$ &  $ \sqrt{\rho}\| \nabla_w( Q_h u- u_h )\|_0 $ & $O(h^r)$\\
\hline 
   &\multicolumn{4}{c}{$\rho=1$ in \eqref{b} }\\  \hline 
 4&    0.708E-05 &  4.1&    0.149E-02 &  3.6\\
 5&    0.454E-06 &  4.0&    0.186E-03 &  3.0\\
 6&    0.288E-07 &  4.0&    0.234E-04 &  3.0\\
 \hline  &\multicolumn{4}{c}{$\lambda=10^{-6}$ in \eqref{b} }\\
 \hline 
 4&    0.156E-04 &  3.9&    0.311E-05 &  3.2\\
 5&    0.103E-05 &  3.9&    0.394E-06 &  3.0\\
 6&    0.671E-07 &  3.9&    0.499E-07 &  3.0\\
  \hline  
\end{tabular} \end{center}  \end{table}

  \begin{table}[H]
  \caption{By the $P_4$-$P_4$/$P_6$ element for \eqref{u-1}  on Figure \ref{f22} grids.} \label{t8}
\begin{center}  
   \begin{tabular}{c|rr|rr}  
 \hline 
$G_i$ &  $ \|Q_h  u -   u_h \| $ & $O(h^r)$ &  $ \sqrt{\rho}\| \nabla_w( Q_h u- u_h )\|_0 $ & $O(h^r)$\\
\hline 
   &\multicolumn{4}{c}{$\rho=1$ in \eqref{b} }\\  \hline 
 3&    0.140E-04 &  6.7&    0.319E-02 &  5.9\\
 4&    0.313E-06 &  5.5&    0.790E-04 &  5.3\\
 5&    0.175E-07 &  ---&    0.417E-05 &  4.2\\
 \hline  &\multicolumn{4}{c}{$\lambda=10^{-6}$ in \eqref{b} }\\
 \hline 
 3&    0.186E-04 &  6.5&    0.386E-05 &  5.7\\
 4&    0.501E-06 &  5.2&    0.149E-06 &  4.7\\
 5&    0.186E-07 &  4.8&    0.884E-08 &  4.1\\
  \hline  
\end{tabular} \end{center}  \end{table}

We solve the equation \eqref{model} with parameters defined in \eqref{b}, again, where 
   the exact solution is 
\an{\label{u-2} u=\sin (\frac \pi 2 x) \sin (\frac \pi 2 y)
     (1-\t{e}^{\frac{x-1}\rho} ) (1-\t{e}^{\frac{y-1}\rho }). } 
We note that the boundary layer appears at the out-going domain boundary, $\{x=1\}$ and $\{y=1\}$.
We plot the finite solutions for \eqref{u-2} in Figure \ref{f-f-s}, where
  a thin region of function is drawn between $u_0$ and $u_b$.

\begin{figure}[H]
 \begin{center}\setlength\unitlength{1.0pt}
\begin{picture}(360,310)(0,0) 
  \put(-10,-320){\includegraphics[width=380pt]{ 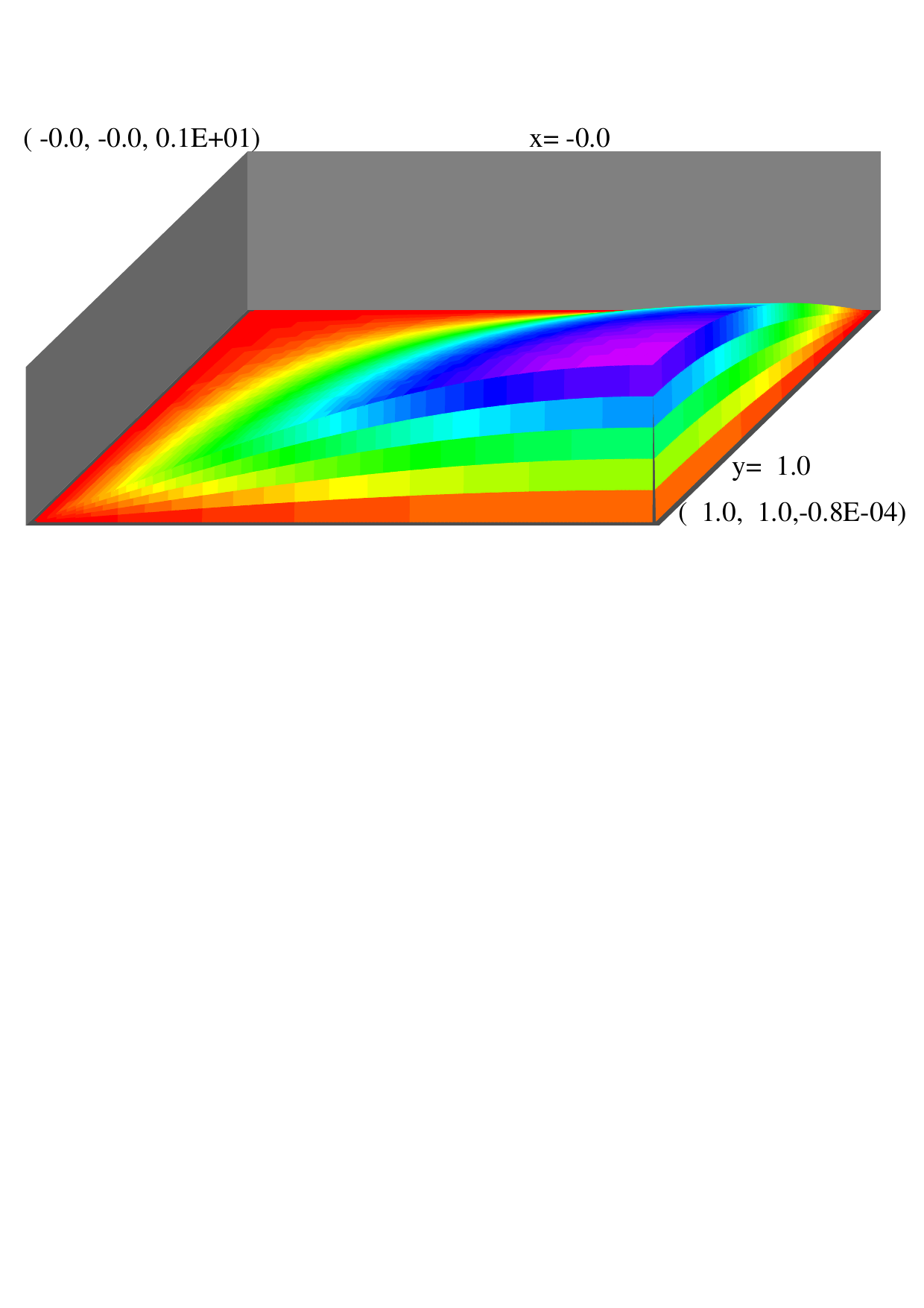}}  
  \put(-10,-145){\includegraphics[width=380pt]{ 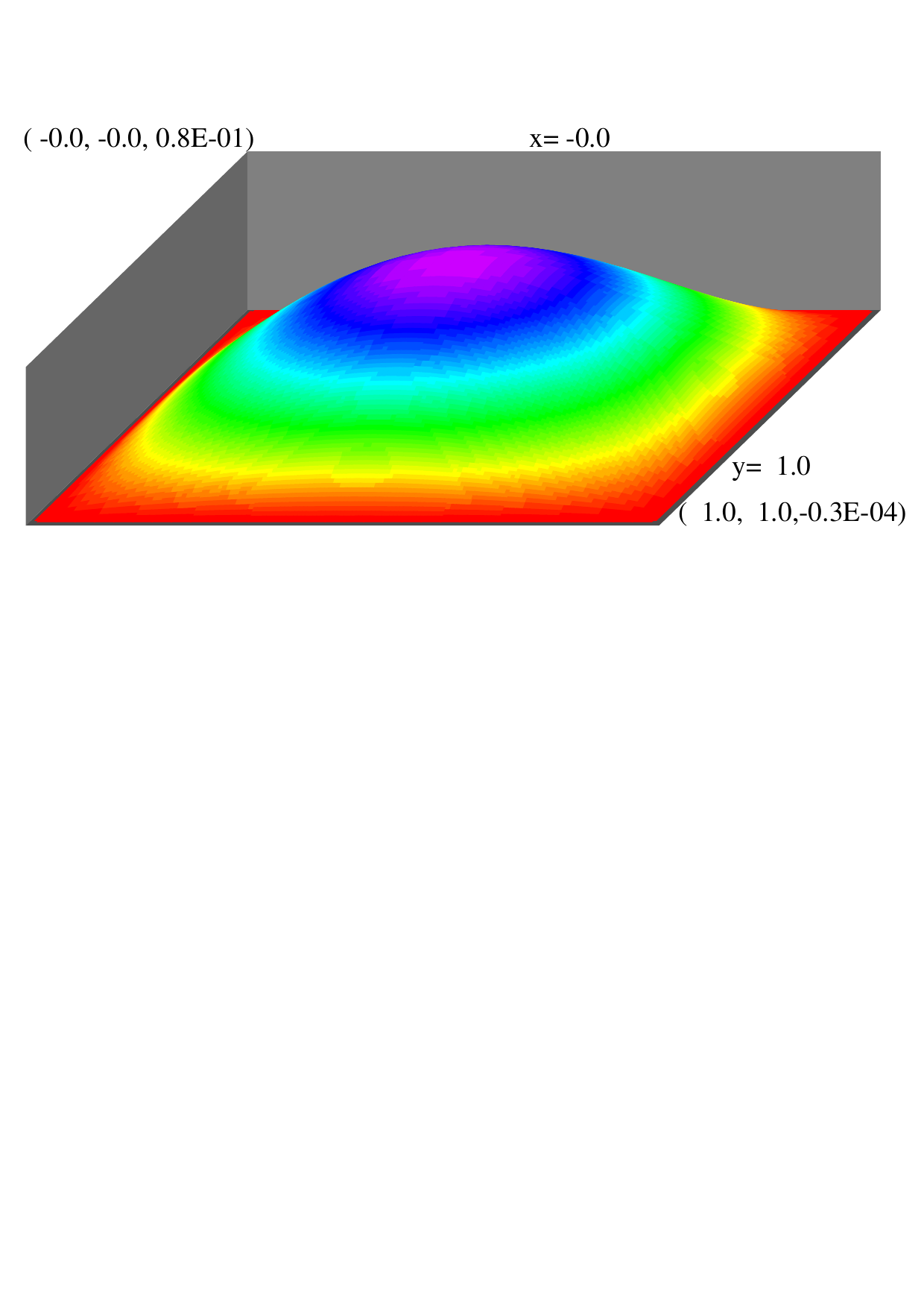}}  
 \end{picture}\end{center}
\caption{The solution of the $P_2$ WG method on grid 5 (Figure \ref{f23})
    for $\rho=1$ (top) and $\rho=10^{-9}$. }\label{f-f-s}
\end{figure}

We compute \eqref{u-2} by the weak Galerkin finite element
   $P_k$-$P_k$/$P_{k+1}$ (for $\{u_0, u_b\}$/$\nabla_w$), $k= 1,2,3,4$, on rectangular  
    grids shown in Figure \ref{f23}.
The errors and the computed orders of convergence are listed in Tables \ref{t5}--\ref{t8}.
The optimal order of convergence is also achieved in every case.
   The WG finite element method is stable for small $\rho$.

\begin{figure}[H]
 \begin{center}\setlength\unitlength{1.0pt}
\begin{picture}(360,120)(0,0)
  \put(15,108){$G_1$:} \put(125,108){$G_2$:} \put(235,108){$G_3$:} 
  \put(0,-420){\includegraphics[width=380pt]{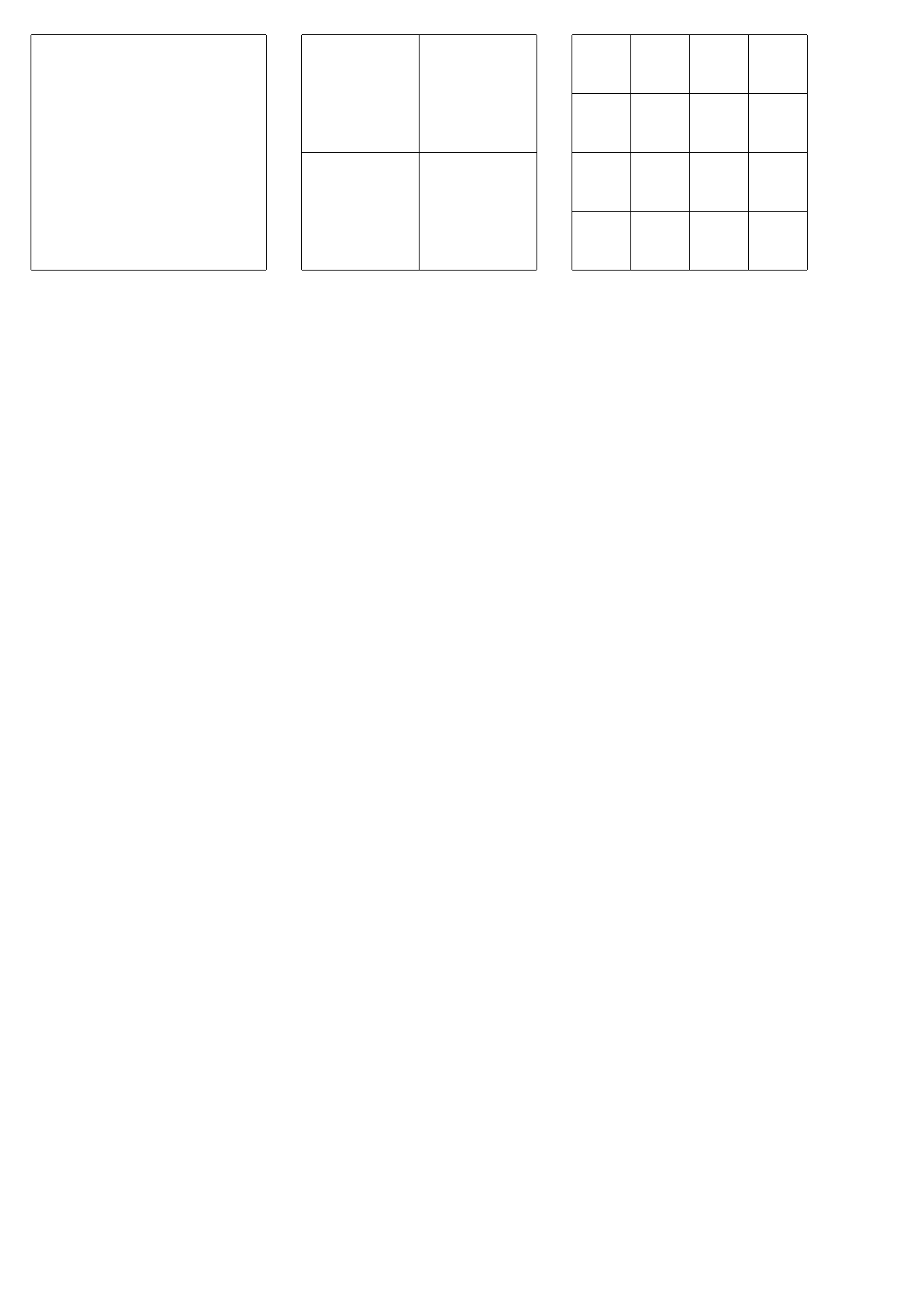}}  
 \end{picture}\end{center}
\caption{The square grids for the computation in Tables \ref{t9}--\ref{t12}. }\label{f23}
\end{figure}

  \begin{table}[H]
  \caption{By the $P_1$-$P_1$/$P_2$ element for \eqref{u-2}  on Figure \ref{f23} grids.} \label{t9}
\begin{center}  
   \begin{tabular}{c|rr|rr}  
 \hline 
$G_i$ &  $ \|Q_h  u -   u_h \| $ & $O(h^r)$ &  $ \sqrt{\rho}\| \nabla_w( Q_h u- u_h )\|_0 $ & $O(h^r)$\\
\hline 
   &\multicolumn{4}{c}{$\rho=1$ in \eqref{u-2} }\\  \hline 
 5&    0.731E-03 &  1.9&    0.120E-01 &  1.2\\
 6&    0.187E-03 &  2.0&    0.576E-02 &  1.1\\
 7&    0.471E-04 &  2.0&    0.285E-02 &  1.0\\
 \hline  &\multicolumn{4}{c}{$\lambda=10^{-9}$ in \eqref{u-2} }\\
 \hline 
 5&    0.345E-03 &  2.1&    0.219E-05 &  1.0\\
 6&    0.840E-04 &  2.0&    0.109E-05 &  1.0\\
 7&    0.207E-04 &  2.0&    0.543E-06 &  1.0\\
  \hline  
\end{tabular} \end{center}  \end{table}

  \begin{table}[H]
  \caption{By the $P_2$-$P_2$/$P_3$ element for \eqref{u-2}  on Figure \ref{f23} grids.} \label{t10}
\begin{center}  
   \begin{tabular}{c|rr|rr}  
 \hline 
$G_i$ &  $ \|Q_h  u -   u_h \| $ & $O(h^r)$ &  $ \sqrt{\rho}\| \nabla_w( Q_h u- u_h )\|_0 $ & $O(h^r)$\\
\hline 
   &\multicolumn{4}{c}{$\rho=1$ in \eqref{u-2} }\\  \hline 
 5&    0.161E-06 &  3.8&    0.649E-04 &  2.1\\
 6&    0.148E-07 &  3.4&    0.161E-04 &  2.0\\
 7&    0.163E-08 &  3.2&    0.400E-05 &  2.0\\
 \hline  &\multicolumn{4}{c}{$\lambda=10^{-9}$ in \eqref{u-2} }\\
 \hline 
 5&    0.601E-05 &  3.0&    0.420E-07 &  2.0\\
 6&    0.754E-06 &  3.0&    0.106E-07 &  2.0\\
 7&    0.106E-06 &  2.8&    0.302E-08 &  1.8\\
  \hline  
\end{tabular} \end{center}  \end{table}

  \begin{table}[H]
  \caption{By the $P_3$-$P_3$/$P_4$ element for \eqref{u-2}  on Figure \ref{f23} grids.} \label{t11}
\begin{center}  
   \begin{tabular}{c|rr|rr}  
 \hline 
$G_i$ &  $ \|Q_h  u -   u_h \| $ & $O(h^r)$ &  $ \sqrt{\rho}\| \nabla_w( Q_h u- u_h )\|_0 $ & $O(h^r)$\\
\hline 
   &\multicolumn{4}{c}{$\rho=1$ in \eqref{u-2} }\\  \hline 
 3&    0.363E-05 &  5.9&    0.252E-03 &  4.8\\
 4&    0.929E-07 &  5.3&    0.169E-04 &  3.9\\
 5&    0.447E-08 &  4.4&    0.195E-05 &  3.1\\
 \hline  &\multicolumn{4}{c}{$\lambda=10^{-9}$ in \eqref{u-2} }\\
 \hline 
 3&    0.193E-04 &  4.0&    0.343E-07 &  2.9\\
 4&    0.120E-05 &  4.0&    0.449E-08 &  2.9\\
 5&    0.784E-07 &  3.9&    0.622E-09 &  2.9\\
  \hline  
\end{tabular} \end{center}  \end{table}

  \begin{table}[H]
  \caption{By the $P_4$-$P_4$/$P_5$ element for \eqref{u-2}  on Figure \ref{f23} grids.} \label{t12}
\begin{center}  
   \begin{tabular}{c|rr|rr}  
 \hline 
$G_i$ &  $ \|Q_h  u -   u_h \| $ & $O(h^r)$ &  $ \sqrt{\rho}\| \nabla_w( Q_h u- u_h )\|_0 $ & $O(h^r)$\\
\hline 
   &\multicolumn{4}{c}{$\rho=1$ in \eqref{u-2} }\\  \hline 
 2&    0.354E-04 &  6.9&    0.150E-02 &  5.9\\
 3&    0.287E-06 &  6.9&    0.241E-04 &  6.0\\
 4&    0.364E-08 &  6.3&    0.514E-06 &  5.6\\
 \hline  &\multicolumn{4}{c}{$\lambda=10^{-9}$ in \eqref{u-2} }\\
 \hline 
 2&    0.263E-04 &  5.7&    0.305E-07 &  4.9\\
 3&    0.890E-06 &  4.9&    0.186E-08 &  4.0\\
 4&    0.383E-07 &  4.5&    0.194E-09 &  3.3\\
  \hline  
\end{tabular} \end{center}  \end{table}

Finally, \ we compute the boundary-layer solution \eqref{u-2}  by the weak Galerkin finite element
   $P_k$-$P_k$/$P_{k+2}$ (for $\{u_0, u_b\}$/$\nabla_w$), $k= 1,2,3,4$, on the nonconvex polygonal  
    grids shown in Figure \ref{f22}.
The errors and the computed orders of convergence are listed in Tables \ref{t13}--\ref{t16}.
The optimal order of convergence is also obtained in every case.

  \begin{table}[H]
  \caption{By the $P_1$-$P_1$/$P_3$ element for \eqref{u-2}  on Figure \ref{f22} grids.} \label{t13}
\begin{center}  
   \begin{tabular}{c|rr|rr}  
 \hline 
$G_i$ &  $ \|Q_h  u -   u_h \| $ & $O(h^r)$ &  $ \sqrt{\rho}\| \nabla_w( Q_h u- u_h )\|_0 $ & $O(h^r)$\\
\hline 
   &\multicolumn{4}{c}{$\rho=1$ in \eqref{u-2} }\\  \hline 
 5&    0.380E-03 &  2.0&    0.217E-01 &  1.0\\
 6&    0.958E-04 &  2.0&    0.109E-01 &  1.0\\
 7&    0.240E-04 &  2.0&    0.544E-02 &  1.0\\
 \hline  &\multicolumn{4}{c}{$\lambda=10^{-9}$ in \eqref{u-2} }\\
 \hline 
 5&    0.350E-03 &  2.0&    0.265E-05 &  1.0\\
 6&    0.881E-04 &  2.0&    0.133E-05 &  1.0\\
 7&    0.221E-04 &  2.0&    0.667E-06 &  1.0\\
  \hline  
\end{tabular} \end{center}  \end{table}

  \begin{table}[H]
  \caption{By the $P_2$-$P_2$/$P_4$ element for \eqref{u-2}  on Figure \ref{f22} grids.} \label{t14}
\begin{center}  
   \begin{tabular}{c|rr|rr}  
 \hline 
$G_i$ &  $ \|Q_h  u -   u_h \| $ & $O(h^r)$ &  $ \sqrt{\rho}\| \nabla_w( Q_h u- u_h )\|_0 $ & $O(h^r)$\\
\hline 
   &\multicolumn{4}{c}{$\rho=1$ in \eqref{u-2} }\\  \hline 
 4&    0.131E-04 &  3.1&    0.221E-02 &  1.9\\
 5&    0.163E-05 &  3.0&    0.564E-03 &  2.0\\
 6&    0.203E-06 &  3.0&    0.142E-03 &  2.0\\
 \hline  &\multicolumn{4}{c}{$\lambda=10^{-9}$ in \eqref{u-2} }\\
 \hline 
 4&    0.489E-04 &  2.9&    0.241E-06 &  2.0\\
 5&    0.623E-05 &  3.0&    0.607E-07 &  2.0\\
 6&    0.792E-06 &  3.0&    0.154E-07 &  2.0\\
  \hline  
\end{tabular} \end{center}  \end{table}

  \begin{table}[H]
  \caption{By the $P_3$-$P_3$/$P_5$ element for \eqref{u-2}  on Figure \ref{f22} grids.} \label{t15}
\begin{center}  
   \begin{tabular}{c|rr|rr}  
 \hline 
$G_i$ &  $ \|Q_h  u -   u_h \| $ & $O(h^r)$ &  $ \sqrt{\rho}\| \nabla_w( Q_h u- u_h )\|_0 $ & $O(h^r)$\\
\hline 
   &\multicolumn{4}{c}{$\rho=1$ in \eqref{u-2} }\\  \hline 
 3&    0.663E-05 &  4.8&    0.809E-03 &  4.4\\
 4&    0.414E-06 &  4.0&    0.850E-04 &  3.2\\
 5&    0.260E-07 &  4.0&    0.107E-04 &  3.0\\
 \hline  &\multicolumn{4}{c}{$\lambda=10^{-9}$ in \eqref{u-2} }\\
 \hline 
 3&    0.129E-04 &  3.8&    0.389E-07 &  3.1\\
 4&    0.927E-06 &  3.8&    0.497E-08 &  3.0\\
 5&    0.740E-07 &  3.6&    0.874E-09 &  2.5\\
  \hline  
\end{tabular} \end{center}  \end{table}

  \begin{table}[H]
  \caption{By the $P_4$-$P_4$/$P_6$ element for \eqref{u-2}  on Figure \ref{f22} grids.} \label{t16}
\begin{center}  
   \begin{tabular}{c|rr|rr}  
 \hline 
$G_i$ &  $ \|Q_h  u -   u_h \| $ & $O(h^r)$ &  $ \sqrt{\rho}\| \nabla_w( Q_h u- u_h )\|_0 $ & $O(h^r)$\\
\hline 
   &\multicolumn{4}{c}{$\rho=1$ in \eqref{u-2} }\\  \hline 
 2&    0.296E-04 &  6.7&    0.366E-02 &  5.7\\
 3&    0.443E-06 &  6.1&    0.714E-04 &  5.7\\
 4&    0.135E-07 &  5.0&    0.289E-05 &  4.6\\
 \hline  &\multicolumn{4}{c}{$\lambda=10^{-9}$ in \eqref{u-2} }\\
 \hline 
 2&    0.213E-04 &  6.5&    0.722E-07 &  5.7\\
 3&    0.537E-06 &  5.3&    0.271E-08 &  4.7\\
 4&    0.390E-07 &  3.8&    0.389E-09 &  ---\\
  \hline  
\end{tabular} \end{center}  \end{table}

\end{document}